\newtheorem{thm}{Theorem}[section]
\newtheorem{lem}{Lemma}[section]
\newtheorem{prop}{Proposition}[section]
\numberwithin{equation}{section}
\newtheorem{rmk}{Remark}[section]
\newcommand{\mysection}[1]{\section{#1}\setcounter{equation}{0}}
\newfont{\bb}{msbm10 at 11pt}
\newcommand{\bal}{\begin{aligned}}      \newcommand{\eal}{\end{aligned}}
\newcommand{\ba}{\begin{array}}      \newcommand{\ea}{\end{array}}
\newcommand{\bc}{\begin{center}}     \newcommand{\ec}{\end{center}}
\newcommand{\be}{\begin{enumerate}}  \newcommand{\ee}{\end{enumerate}}
\newcommand{\beq}{\begin{eqnarray}}  \newcommand{\eeq}{\end{eqnarray}}
\newcommand{\beQ}{\begin{eqnarray*}} \newcommand{\eeQ}{\end{eqnarray*}}
\newcommand{\bi}{\begin{itemize}}    \newcommand{\ei}{\end{itemize}}
\newcommand{\bt}{\begin{tabular}}    \newcommand{\et}{\end{tabular}}
\newcommand{\bdm}{\begin{displaymath}} \newcommand{\edm}{\end{displaymath}}
\newcommand{\ls}{\setlength{\baselineskip}{12pt}
                 \setlength{\parskip}{3mm}}
\begin{document}

\title[Horowitz-Myers Conjecture]{The positive energy conjecture for a class of AHM metrics on $\mathbb{R}^{2}\times\mathbb{T}^{n-2}$}

\author[Z Liang]{Zhuobin Liang$^{\dag}$}
\address[]{$^{\dag}$Department of Mathematics, Jinan University, Guangzhou, 510632, P. R. China}
\email{tzbliang@jnu.edu.cn}
\author[X Zhang]{Xiao Zhang$^{\flat}$}
\address[]{$^{\flat}$Guangxi Center for Mathematical Research, Guangxi University, Nanning, Guangxi 530004, PR China}
\address[]{$^{\flat}$Institute of Mathematics, Academy of Mathematics and Systems Science, Chinese Academy of Sciences,
Beijing 100190, PR China and School of Mathematical Sciences, University of Chinese Academy of Sciences, Beijing 100049, PR China}
\email{xzhang@gxu.edu.cn, xzhang@amss.ac.cn}

\date{}

\begin{abstract}
We prove the positive energy conjecture for a class of asymptotically Horowitz-Myers (AHM) metrics on $\mathbb{R}^{2}\times\mathbb{T}^{n-2}$.
This generalizes the previous results of Barzegar-Chru\'{s}ciel-H\"{o}rzinger-Maliborski-Nguyen \cite{BCHMN} as well as the authors \cite{LZ}.
\end{abstract}

\maketitle \pagenumbering{arabic}

\mysection{Introduction}
\ls

In \cite{HM}, Horowitz-Myers constructed so-called AdS solitons with toroidal topology
\beQ
-r^{2}dt^{2}+\frac{1}{\frac{r^{2}}{\ell^{2}}\left(1-\frac{\breve{r} _{0}^{n}}{r^{n}}\right)}dr^{2}+\frac{r^{2}}{\ell^{2}}\left(1-\frac{\breve{r} _{0}^{n}}{r^{n}}\right)d\xi^{2}
+r^{2}\sum_{i=3} ^{n}(d\phi ^{i})^{2}  \label{eq:ads_soliton_spacetime_metric}
\eeQ
which are globally static vacuum $(n+1)$-dimensional spacetime with cosmological constant
\beQ
\Lambda=-\frac{n(n-1)}{2\ell^{2}}.
\eeQ
For convenience, we assume $\ell =1$ throughout the paper. These metrics are geodesically complete when
\beQ
r \in [\breve{r} _{0},\infty), \quad \xi \in [0, \beta_0], \quad \phi^{i}\in[0,\lambda_{i}]
\eeQ
for constants $\breve{r} _0 >0$, $\lambda_{i}>0$, $i=3,\dots,n$ and
\beq
\beta _0=\frac{4\pi }{n \breve{r}_{0}}. \label{beta0}
\eeq
The induced Riemannian metrics $g_{\text{HM}}$ on the constant time slices
\beq
g_{\text{HM}}=\frac{1}{r^{2}\left(1-\frac{\breve{r}_{0}^{n}}{r^{n}}\right)}dr^{2}+r^{2}\left(1-\frac{\breve{r}_{0}^{n}}{r^{n}}\right)d\xi^{2}+r^{2}\sum_{i=3} ^{n}(d\phi ^{i})^{2}\label{eq:ads_soliton_t-slice_g}
\eeq
are asymptotically locally hyperbolic (ALH) and are referred as Horowitz-Myers metrics. Horowitz-Myers also verified that the Hawking-Horowitz mass of $g_{\text{HM}}$ is negative and conjectured that, among all metrics which are asymptotic to $g_{\text{HM}}$ with the same period $\beta _0$ for $\xi$ and with the scalar curvature
\beq
R \geq -n(n-1), \label{S}
\eeq
$g_{\text{HM}}$ is the unique metric with the least Hawking-Horowitz mass \cite{HM}.

Throughout the paper, we misuse the notation and refer a function $f$ type $O(\frac{1}{r^{m}})$ if for \textit{all}
$k\geq0$, its $k$th partial derivative with respect to $r$ satisfies
\begin{equation}
\partial_{r}^{k}f\sim\frac{1}{r^{m+k}},\qquad r\to\infty.\label{eq:decay}
\end{equation}
Furthermore, a tensor is referred as type $O(\frac{1}{r^{m}})$ if its components belong to $O(\frac{1}{r^{m}})$ in coordinates $\{r,\xi,\phi^{i}\}$.

As both Schoen-Yau's method and Witten's method are hard to apply in this situation, there is less progress towards proof of the conjecture. In 2020, Barzegar-Chru\'{s}ciel-H\"{o}rzinger-Maliborski-Nguyen studied the following asymptotically Horowitz-Myers metrics on $\mathbb{R}^{2}\times\mathbb{T}^{n-2}$
\beq
g_1= e^{2u(r)} dr^2 + e^{2v(r)} d\xi^{2}+ e^{2w(r)}\sum_{i=3}^{n}(d\phi ^{i})^{2}, \label{g1}
\eeq
where $u$, $v$, $w$ are functions of $r$ which satisfy
\beq
\begin{aligned}
u(r) & = -\ln r -\frac{1}{2}\ln \left(1-\frac{\breve{r}_0 ^n}{r^n}\right)+ \frac{u_n}{r^n}+O\big(\frac{1}{r^{n+1}}\big),\\
v(r) & = \ln r +\frac{1}{2}\ln \left(1-\frac{\breve{r}_0 ^n}{r^n}\right)+ \frac{v_n}{r^n}+O\big(\frac{1}{r^{n+1}}\big),\\
w(r) & = \ln r+\frac{w_n}{r^n}+O\big(\frac{1}{r^{n+1}}\big),  \label{ag1}
\end{aligned}
\eeq
for constants $u_n$, $v_n$ and $w_n$. The metric $g_1$ is asymptotic to $g_{\text{HM}}$ up to order $O\big(\frac{1}{r^n}\big)$.
In \cite{BCHMN}, they verified Horowitz-Myers conjecture for $g_1$ if its scalar curvature satisfies (\ref{S}) with $\ell =1$ .

Let $a$ be certain constant and $r_{+}$ be the largest positive root of the equation
\beQ
1+\frac{a}{r^{n-1}}-\frac{r_{0}^{n}}{r^{n}}=0
\eeQ
where constant $r_0>0$. Denote
\beq
\beta=\frac{4\pi}{r_{+}\left(n-1+\frac{r_{0}^{n}}{r_{+}^{n}}\right)}.\label{eq:period_xi}
\eeq
In \cite{LZ}, we construct metrics of Horowitz-Myers type with the negative constant scalar curvature
\beq
\mathring{g}=\frac{1}{r^{2}\bigg(1+\frac{a}{r^{n-1}}-\frac{r_{0}^{n}}{r^{n}}\bigg)}dr^{2}+r^{2}\bigg(1+\frac{a}{r^{n-1}}-\frac{r_{0}^{n}}{r^{n}}\bigg)d\xi^{2}+r^{2}\sum_{i=3}^{n}(d\phi^{i})^{2}, \label{eq:HM_type_metric}
\eeq
where
\beQ
r\in[r_{+},\infty),\quad  \xi\in[0,\beta], \quad  \phi^{i}\in[0,\lambda_{i}].
\eeQ
This is a geodesically complete metric on $\mathbb{R}^{2}\times\mathbb{T}^{n-2}$ of constant scalar curvature $-n(n-1)$. And it is asymptotic to $g_{\text{HM}}$ up to order $O\big(\frac{1}{r^{n-1}}\big)$ provided $\beta_{0}=\beta$. As $\mathring{g}$ decays weaker than $g_1$, we can not apply their positive energy theorem to $\mathring{g}$ directly. However, we can still verify that the Horowitz-Myers conjecture holds for $\mathring{g}$ \cite{LZ}.

Now we study the following more general perturbation of Horowitz-Myers metrics on $\mathbb{R}^{2}\times\mathbb{T}^{n-2}$
\beq
g= e^{2u(r)} dr^2 + e^{2v(r,\xi)} d\xi^{2}+ r^2 \left( \sum_{i=3} ^{n} (d\phi^{i})^2 +\hat w (r, \xi, \phi ^i) \right), \label{eq:g}
\eeq
where
\beq
\begin{aligned}
u(r) & = -\ln r -\frac{1}{2}\ln \left(1+\frac{a}{r^{n-1}}-\frac{r_0 ^n}{r^n}\right)+\hat u(r),\\
v(r, \xi) &= \ln r +\frac{1}{2}\ln \left(1+\frac{a}{r^{n-1}}-\frac{r_0 ^n}{r^n}\right)+\hat v(r, \xi),\\
e^{\hat u (r)} &= 1+\frac{u_{n-1}}{r^{n-1}} +\frac{u_n}{r^n}+O\big(\frac{1}{r^{n+1}}\big),\\
e^{\hat v (r, \xi)} &= 1+\frac{v_{n-1} (\xi) }{r^{n-1}} +\frac{v_n(\xi) }{r^n}+O\big(\frac{1}{r^{n+1}}\big),\\
\hat w (r, \xi, \phi ^i) &= \frac{2 w_{n-1} (\xi, \phi ^i) }{r^{n-1}} +\frac{2 w_n (\xi, \phi ^i) }{r^n}+O\big(\frac{1}{r^{n+1}}\big),\label{eq:ag}
\end{aligned}
\eeq
and $u_{n-1}$, $u_{n}$ are real constants, $v_{n-1}(\xi)$, $v_{n}(\xi)$ are smooth functions of $\xi$, $w_{n-1}(\xi, \phi^i)$ and $w_{n}(\xi, \phi^i)$ are 2-tensors on $\mathbb{T}^{n-2}$
which depend on $\xi$, $\phi^{i}$. The restriction of $g$ on $\mathbb{T}^{n-2}$ is denoted as
\beQ
\gamma=r^2 \left( \sum_{i=3} ^{n} (d\phi^{i})^2 +\hat w (r, \xi, \phi ^i) \right).
\eeQ

In this paper, we show that the Horowitz-Myers conjecture is true for the above AHM metrics.

\begin{thm}
\label{thm:Main} Let $n\geq3$ and let $g$ be the smooth metric (\ref{eq:g}) on $\mathbb{R}^{2}\times\mathbb{T}^{n-2}$, which satisfies (\ref{eq:ag}).
Suppose the scalar curvature $R(g)$ of $g$ satisfies
\begin{equation}
R(g)+n(n-1)\geq0, \quad \big[R(g)+n(n-1)\big]r\in L^{1}(\mathbb{R}^{2}\times\mathbb{T}^{n-2}, g),  \label{eq:Main_thm_scalar_curv_cond}
\end{equation}
and suppose
\beq
\beta_{0}=\beta.
\label{eq:beta0=beta}
\eeq
If $n\geq4$, we assume in addition that the scalar curvature $R(\gamma)$ of $\gamma$ satisfies
\begin{equation}
\int_{\mathbb{T}^{n-2}}R(\gamma)d\phi^{3}\cdots d\phi^{n}\leq0  \label{eq:Main_thm_scalar_cur_torus}
\end{equation}
for all $r$ and $\xi$.
Then the total energy
\beQ
E(g) - E(g_{\text{HM}})\geq 0.
\eeQ
The equality holds if and only if $g$ is isometric to $g_{\text{HM}}$.
\end{thm}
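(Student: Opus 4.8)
The plan is to follow the Hamiltonian/spinorial-free strategy pioneered by Barzegar--Chru\'{s}ciel--H\"{o}rzinger--Maliborski--Nguyen \cite{BCHMN} and extended in \cite{LZ}, adapted to the weaker decay $O(\frac{1}{r^{n-1}})$ and the $\xi,\phi^i$-dependence now present in $g$. First I would reduce the problem to a one-dimensional variational inequality. Concretely, for each fixed $\xi$ (and, when $n\geq 4$, after integrating over the torus fibre $\mathbb{T}^{n-2}$), one builds from the metric $g$ a model ODE profile $b(r)$ of Horowitz-Myers type carrying the same asymptotic data, so that the difference of energies $E(g)-E(g_{\text{HM}})$ is expressed as a boundary term at infinity. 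The key analytic identity is a Pohozaev--Schoen--Yau / Reilly-type integration by parts: one writes $R(g)+n(n-1)$ as a divergence plus a manifestly nonnegative quadratic expression in the second fundamental form of the $\{r=\text{const}\}$ leaves together with $\big[R(\gamma)+ (n-2)(n-3)\big]$-type curvature terms of the torus slices, integrates over $\{r\leq \rho\}\times\mathbb{T}^{n-2}$, and lets $\rho\to\infty$. The $L^1$ hypothesis in \eqref{eq:Main_thm_scalar_curv_cond} guarantees the bulk integral converges; the sign condition $R(g)+n(n-1)\geq 0$ and the torus condition \eqref{eq:Main_thm_scalar_cur_torus} make the bulk nonnegative; what survives is the boundary term, which by the asymptotics \eqref{eq:ag} evaluates to a multiple of $E(g)-E(g_{\text{HM}})$.

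Next I would carry out the asymptotic bookkeeping. Using \eqref{eq:g}--\eqref{eq:ag}, expand $R(g)$ to the order at which the mass coefficients $u_{n-1},v_{n-1}(\xi),w_{n-1}(\xi,\phi^i)$ and $u_n,v_n(\xi),w_n(\xi,\phi^i)$ enter. The condition $\beta_0=\beta$ in \eqref{eq:beta0=beta} is exactly what forces the leading $O(\frac{1}{r^{n-1}})$ discrepancy between $g$ and $g_{\text{HM}}$ to be a pure gauge/coordinate artifact rather than a genuine mass term, so that the Hawking--Horowitz mass is well-defined and the $r^{-(n-1)}$ coefficients do not produce a divergent boundary integral. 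One then checks that the boundary integrand, after using the constraint that $R(g)+n(n-1)$ is integrable against $r\,dV_g$ (which pins down a relation among the $r^{-n}$ coefficients, cf.\ the analogous computation in \cite{LZ}), reduces to the renormalized energy difference. The monotonicity/positivity of the bulk term then yields $E(g)-E(g_{\text{HM}})\geq 0$.

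For the rigidity statement, equality in the integrated identity forces the nonnegative bulk integrand to vanish identically: the second fundamental form of the $\{r=\text{const}\}$ foliation must match that of the Horowitz--Myers model, $R(g)+n(n-1)\equiv 0$, and (for $n\geq 4$) $\int_{\mathbb{T}^{n-2}}R(\gamma)=0$ for all $r,\xi$ forcing the torus slices to be flat of the right size. Feeding these back into the structure equations, one integrates the resulting ODE system in $r$ with the boundary conditions at $r=r_+$ (smooth closing up of $\mathbb{R}^2$, which fixes $\beta$) and the asymptotics at infinity, and concludes $\hat u\equiv\hat v\equiv 0$, $\hat w\equiv 0$, i.e.\ $g=g_{\text{HM}}$ up to isometry.

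The main obstacle I anticipate is controlling the cross terms generated by the genuine $\xi$- and $\phi^i$-dependence of $v$ and $\hat w$: unlike in \cite{BCHMN,LZ}, the metric is no longer a warped product over a one-dimensional base, so the divergence structure used to extract the boundary term acquires tangential derivative terms along $\mathbb{T}^{n-2}$. These must be shown to integrate to zero over the closed fibre (using that $\mathbb{T}^{n-2}$ has no boundary) and to contribute nothing at spatial infinity after the $\rho\to\infty$ limit; handling them is where the hypothesis \eqref{eq:Main_thm_scalar_cur_torus} and the precise form of the expansion \eqref{eq:ag} do the real work. A secondary technical point is verifying that the weaker $O(\frac{1}{r^{n-1}})$ decay, together with the $L^1$ condition, is still enough to make every error term in the boundary expansion vanish in the limit, which requires the "all derivatives decay" convention in \eqref{eq:decay} to propagate through the curvature computation.
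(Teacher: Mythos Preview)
Your overall architecture---write $R(g)+n(n-1)$ as a total $r$-derivative plus nonnegative terms plus tangential divergences that die after integration over $\mathbb{T}^{n-1}$, then read off the energy at infinity---is indeed what the paper does. But two of the load-bearing steps are either missing or misattributed in your plan.

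First, you have the role of $\beta_0=\beta$ backwards. It is \emph{not} what tames the $O(r^{-(n-1)})$ tail at infinity; that job is done entirely by the $L^1$ hypothesis, which forces $u_{n-1}+v_{n-1}+\mathrm{tr}_{h_0}w_{n-1}=0$ (Proposition~\ref{prop:scalar_curvature_L1}) and hence kills the would-be divergent boundary contribution. The condition $\beta_0=\beta$ enters at the \emph{inner} end $r=r_+$: together with smoothness of $g$ there (which gives $\hat v(r_+,\xi)=\hat u(r_+)$), it pins down the boundary value of the conformal factor in the $\xi$-direction. Concretely, after the paper's key preliminary step---a change of radial coordinate $r\mapsto \tilde r$ chosen so that the $dr^2$ part becomes exactly $\tilde r^{-2}(1-\tilde r_0^n/\tilde r^n)^{-1}d\tilde r^2$ for some $\tilde r_0>0$---one obtains $e^{\hat{\tilde v}(\tilde r_0,\xi)}=\breve r_0/\tilde r_0$ (Lemma~\ref{lem:v_tilde_hat_r0}). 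Without this coordinate normalization and this inner boundary value, the integration-by-parts does not close.

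Second, the boundary term you get is \emph{not} directly a multiple of $E(g)-E(g_{\mathrm{HM}})$. Integrating the scalar-curvature identity over $[\tilde r_0,\infty)$ produces, at the inner end, the term $n\tilde r_0^{\,n}\big(1-\breve r_0/\tilde r_0\big)$, while the mass formula \eqref{eq:mass_diff_2} contributes $(\breve r_0^{\,n}-\tilde r_0^{\,n})\mathrm{Vol}(\partial_\infty M)$. These combine to $\tilde r_0^{\,n}\big[(n-1)+(\breve r_0/\tilde r_0)^n-n\,\breve r_0/\tilde r_0\big]$, and one must invoke the elementary inequality $n-1+s^n-ns\ge 0$ for $s>0$, with equality iff $s=1$, to conclude $E(g)\ge E(g_{\mathrm{HM}})$. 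This algebraic step is essential and absent from your outline.

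For rigidity, equality forces $\tilde r_0=\breve r_0$ and the nonnegative bulk term $A\equiv 0$, which gives $\hat{\tilde W}^r=\tilde W^\xi=0$ and the traceless parts vanish, hence $\gamma=\tilde r^2\sum (d\phi^i)^2$. But $\hat{\tilde v}$ is not recovered by ``integrating an ODE system'': it satisfies a second-order ODE in $\tilde r$ with the two boundary values $e^{\hat{\tilde v}(\tilde r_0)}=1$ and $e^{\hat{\tilde v}(\infty)}=1$, and one concludes $\hat{\tilde v}\equiv 0$ via the maximum principle. Your sketch should be amended to include the coordinate change $r\mapsto\tilde r$, the correct use of $\beta_0=\beta$ at the inner boundary, the algebraic inequality, and the maximum-principle step.
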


\begin{rmk}
The assumption that (\ref{eq:decay}) holds for \textit{all} $k\geq0$ is only for the sake of simplicity of notations appeared in the paper.
Indeed, the proof of Theorem \ref{thm:Main} needs only $k =0,1,2$.
\end{rmk}

\begin{rmk}
Condition (\ref{eq:Main_thm_scalar_cur_torus}) is trivial when $n=3$, since in this case, $\gamma$
is a one dimensional metric with $R(\gamma)\equiv0$. If $n=4$, then the Gauss-Bonnet theorem states that
\beQ
\int_{\mathbb{T}^{2}}R(\gamma)dV_{\gamma}=0,
\eeQ
which can not implies (\ref{eq:Main_thm_scalar_cur_torus}).
\end{rmk}

\begin{rmk}
In \cite{BCHMN}, the components of $\gamma$ are assumed to be independent of $\phi^{i}$ (e.g. (\ref{g1})). In
this case, $\gamma$ is a flat metric on $\mathbb{T}^{n-2}$ and (\ref{eq:Main_thm_scalar_cur_torus}) holds for all $n$.
\end{rmk}

The paper is organized as follows. In Section 2, we show that $g$ can be written as an asymptotically Poincar\'{e}-Einstein (APE) metric. Then the total energy can be computed in terms of the definitions of Wang \cite{Wa} and
Chru\'{s}ciel-Herzlich \cite{CH}. In Section 3, we prove the Horowitz-Myers conjecture for $g$.

\mysection{APE metrics}
\ls

In this section, we show that $g$ is actually an APE metric. The boundary-at-infinity $\partial_{\infty}M$ of $\mathbb{R}^{2}\times\mathbb{T}^{n-2}$ is $\mathbb{T}^{n-1}$.
Let $x$ be a special defining function for $\partial_{\infty}M$. That is, on a collar neighborhood of $\partial_{\infty}M$,

(1) $x\geq0$ with $\{x=0\}=\partial_{\infty}M$,

(2) $\vert dx\vert_{x^{2}g}\equiv1$.\\\\
On this collar neighborhood, the metric $g$ can be written in the form
\[
g=\frac{1}{x^{2}}(dx^{2}\oplus h_{x}),\quad \lim _{r \to\infty} xr=1
\]
where $h_{x}$ are the induced metrics of $x^{2}g$ on the constant $x$ slices,
\[
\frac{1}{x}dx=-\frac{e^{\hat{u}(r)}}{r\sqrt{1+\frac{a}{r^{n-1}}-\frac{r_{0}^{n}}{r^{n}}}}dr.
\]
It follows that
\beQ
\begin{aligned}
\ln x ={} &  -\int_{r_{+}}^{r}\frac{e^{\hat{u}(s)}}{s\sqrt{1+\frac{a}{s^{n-1}}-\frac{r_{0}^{n}}{s^{n}}}}ds+C\\
      ={} & -\ln r-\int_{r_{+}}^{r} \left( \frac{u_{n-1}-\frac{1}{2}a}{s^{n}}+\frac{u_{n}+\frac{1}{2}r_{0}^{n}}{s^{n+1}}+O(s^{-n-2})  \right)ds\\
        & +\ln r_{+}+C,
\end{aligned} \label{eq:defining_func_x}
\eeQ
where
\beQ
C =-\ln r_{+}+\int_{r_{+}}^{\infty}\frac{1}{s}\left(\frac{e^{\hat{u}(s)}}{\sqrt{1+\frac{a}{s^{n-1}}-\frac{r_{0}^{n}}{s^{n}}}}-1\right)ds.
\eeQ
Therefore, we obtain
\[
r=x^{-1}+\frac{u_{n-1}-\frac{1}{2}a}{n-1}x^{n-2}+\frac{u_{n}+\frac{1}{2}r_{0}^{n}}{n}x^{n-1}+O(x^{n}).
\]
Putting this into (\ref{eq:g}) and applying the asymptotic conditions (\ref{eq:ag}), we obtain
\begin{align}
g ={} &  \frac{1}{x^{2}}\bigg\{ dx^{2}+d\xi^{2}+\sum_{i=3}^{n}(d\phi^{i})^{2}   \nonumber  \\
  & +\frac{x^{n-1}}{n-1}\bigg[\big((n-2)a+2u_{n-1}+2(n-1)v_{n-1}\big)d\xi^{2}  \nonumber  \\
  & +\big(2u_{n-1}-a\big)\sum_{i=3}^{n}(d\phi^{i})^{2}+2(n-1)w_{n-1}\bigg]   \label{eq:g_normal_expansion} \\
  & +\frac{x^{n}}{n}\bigg[(-(n-1)r_{0}^{n}+2u_{n}+2nv_{n})d\xi^{2}   \nonumber \\
  & +(r_{0}^{n}+2u_{n})\sum_{i=3}^{n}(d\phi^{i})^{2}+2nw_{n}\bigg]+O(x^{n+1})\bigg\}. \nonumber
\end{align}
Denote
\begin{align*}
h_{0}   ={}& d\xi^{2}+\sum_{i=3}^{n}(d\phi^{i})^{2},\\
\theta   ={}& \Big((n-2)a+2u_{n-1}+2(n-1)v_{n-1}\Big)d\xi^{2}\\
          &+(2u_{n-1}-a)\sum_{i=3}^{n}(d\phi^{i})^{2}+2(n-1)w_{n-1},\\
\kappa  ={}& \Big(-(n-1)r_{0}^{n}+2u_{n}+2nv_{n}\Big)d\xi^{2}\\
         & +\big(r_{0}^{n} +2u_{n}\big)\sum_{i=3}^{n}(d\phi^{i})^{2}+2nw_{n}.
\end{align*}

The conformal infinity of $g$ is defined as the conformal class $[h_{0}]$ on $\partial_{\infty}M$ (cf. \cite{GSW}). The mass aspect function, the total energy are defined as
\begin{align}
\text{tr}_{h_{0}}\kappa &=-r_{0}^{n}+2(n-1)u_{n}+2nv_{n}+2n\text{tr}_{h_{0}}w_{n}, \label{eq:mass_aspect_function} \\
E(g)&=\int_{\partial_{\infty}M}\text{tr}_{h_{0}}\kappa dV_{h_{0}}\label{eq:mass}
\end{align}
respectively \cite{Wa, CH}. When $u_{n}=v_{n}=w_{n}=0$, we obtain
\beQ
E(g_{\text{HM}})=-\breve{r}_{0}^{n}\text{Vol}(\partial_{\infty}M,h_{0}).
\eeQ
Therefore
\begin{equation}
E(g)-E(g_{\text{HM}})=\int_{\partial_{\infty}M}\bigg[\breve{r}_{0}^{n}-r_{0}^{n}+2(n-1)u_{n}+2nv_{n}+2n\text{tr}_{h_{0}}w_{n}\bigg]dV_{h_{0}}.\label{eq:mass_diff}
\end{equation}

Now we express the scalar curvature $R(g)$ in some appropriate form. Denote
\[
W^{r}=\frac{1}{2}\text{tr}_{\gamma}(\partial_{r}\gamma)=\frac{1}{2}(\partial_{r}\gamma_{ij})\gamma^{ij}
\]
and let $\mathring{W} ^{r}$ be the trace-less part of $\partial_{r}\gamma$. It is straightforward that
\begin{equation}
\partial_{r}\gamma=\frac{2W^{r}}{n-2}\gamma+\mathring{W} ^{r}.\label{eq:gamma_r_derivatives}
\end{equation}
By the asymptotic condition of $\gamma$, we obtain
\begin{equation}
W^{r}=\frac{n-2}{r}-\frac{(n-1)\text{tr}_{h_{0}}w_{n-1}}{r^{n}}-\frac{n\text{tr}_{h_{0}}w_{n}}{r^{n+1}}+O(r^{-n-2}).\label{eq:Wr_expansion}
\end{equation}
Similarly, denote
\[
W^{\xi}=\frac{1}{2}\text{tr}_{\gamma}(\partial_{\xi}\gamma)=\frac{1}{2}(\partial_{\xi}\gamma_{ij})\gamma^{ij}
\]
and let $\mathring{W}^{\xi}$ be the trace-less part of $\partial_{\xi}\gamma$. We can also get
\begin{equation}
\partial_{\xi}\gamma=\frac{2W^{\xi}}{n-2}\gamma+\mathring{W}^{\xi}.\label{eq:gamma_xi_derivatives}
\end{equation}

Set
\begin{equation}
\hat{W}^{r}=W^{r}-\frac{n-2}{r}.\label{eq:Wr_hat}
\end{equation}

\begin{lem}
\label{lem:curvature_formula} With the above notations, the scalar curvature
$R(g)$ of $g$ is
\begin{equation}
\begin{aligned}
R(g) ={}& -2e^{-u-v}\partial_{r}\bigg[e^{v-u}(\partial_{r}v+W^{r})\bigg]-2e^{-v}\partial_{\xi}\big(e^{-v}W^{\xi}\big)\\
      &+R(\gamma) -2e^{-2u}\bigg[\frac{n-1}{2(n-2)}(W^{r})^{2}+\frac{1}{8}\vert \mathring{W} ^{r}\vert_{\gamma}^{2}\bigg]\\
      & -2e^{-2v}\bigg[\frac{n-1}{2(n-2)}(W^{\xi})^{2}+\frac{1}{8}\vert \mathring{W}^{\xi}\vert_{\gamma}^{2}\bigg].
\end{aligned}
\label{eq:scalar_curvature}
\end{equation}
\end{lem}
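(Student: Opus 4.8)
The plan is to obtain \eqref{eq:scalar_curvature} by applying \emph{twice} the elementary formula describing how the scalar curvature transforms under a one‑codimension splitting. Recall that if $\hat g=N^{2}dt^{2}+\bar g_{t}$ on an open subset of $\mathbb{R}\times F$, with $N>0$ a function of $t$ alone, then the slices $\{t=\text{const}\}$ have second fundamental form $K_{ij}=\frac{1}{2N}\partial_{t}(\bar g_{t})_{ij}$ and mean curvature $H=\text{tr}_{\bar g_{t}}K$, and
\[
R(\hat g)=R(\bar g_{t})-|K|_{\bar g_{t}}^{2}-H^{2}-\frac{2}{N}\partial_{t}H .
\]
This is the Gauss equation together with the Riccati equation for the Ricci curvature in the normal direction; there is no $\Delta N$ term precisely because $N$ is constant along each slice (equivalently, reparametrise $s=\int N\,dt$ to reduce to $N\equiv 1$). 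I would use this first with $t=r$, writing $g=e^{2u(r)}dr^{2}+\bar g_{r}$ and $\bar g_{r}=e^{2v(r,\xi)}d\xi^{2}+\gamma$, and then a second time applied to the slice metric $\bar g_{r}=e^{2v(r,\xi)}d\xi^{2}+\gamma$ with $t=\xi$; on a fixed‑$r$ slice the ``lapse'' $e^{v(r,\cdot)}$ is a function of $\xi$ alone, so again no Laplacian term appears.

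For the first step, $N=e^{u}$, $K$ is block diagonal with $K^{\xi}{}_{\xi}=e^{-u}\partial_{r}v$ and $K_{ab}=\frac12 e^{-u}\partial_{r}\gamma_{ab}$, so $H=e^{-u}(\partial_{r}v+W^{r})$. Using the decomposition \eqref{eq:gamma_r_derivatives} and that $\mathring W^{r}$ is $\gamma$‑trace‑free, $|\partial_{r}\gamma|_{\gamma}^{2}=\frac{4}{n-2}(W^{r})^{2}+|\mathring W^{r}|_{\gamma}^{2}$, whence
\[
|K|_{\bar g_{r}}^{2}+H^{2}=e^{-2u}\Big[2(\partial_{r}v)^{2}+2\,\partial_{r}v\,W^{r}+\tfrac{n-1}{n-2}(W^{r})^{2}+\tfrac14|\mathring W^{r}|_{\gamma}^{2}\Big].
\]
Inserting this into the splitting formula and using the identity $-2e^{-u-v}\partial_{r}\big[e^{v-u}(\partial_{r}v+W^{r})\big]=-\frac{2}{e^{u}}\partial_{r}\big(e^{-u}(\partial_{r}v+W^{r})\big)-2e^{-2u}\big((\partial_{r}v)^{2}+\partial_{r}v\,W^{r}\big)$ (expand the $r$‑derivative on the left to check) to package the $\partial_{r}v$‑quadratic terms together with $\frac{2}{e^{u}}\partial_{r}H$, I get
\[
R(g)=R(\bar g_{r})-2e^{-u-v}\partial_{r}\big[e^{v-u}(\partial_{r}v+W^{r})\big]-2e^{-2u}\Big[\tfrac{n-1}{2(n-2)}(W^{r})^{2}+\tfrac18|\mathring W^{r}|_{\gamma}^{2}\Big].
\]

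For the second step, the splitting formula applied to $\bar g_{r}=e^{2v}d\xi^{2}+\gamma$ with $t=\xi$ gives: the slices $\{\xi=\text{const}\}$ carry the metric $\gamma$, with $K'_{ab}=\frac12 e^{-v}\partial_{\xi}\gamma_{ab}$ and $H'=e^{-v}W^{\xi}$ --- there is now no ``$\partial_{\xi}v$'' contribution because the warped direction \emph{is} the normal direction --- and \eqref{eq:gamma_xi_derivatives} yields
\[
R(\bar g_{r})=R(\gamma)-2e^{-v}\partial_{\xi}\big(e^{-v}W^{\xi}\big)-2e^{-2v}\Big[\tfrac{n-1}{2(n-2)}(W^{\xi})^{2}+\tfrac18|\mathring W^{\xi}|_{\gamma}^{2}\Big].
\]
Substituting this for $R(\bar g_{r})$ in the previous display gives precisely \eqref{eq:scalar_curvature}.

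The computations are routine index gymnastics; the one genuinely fiddly point is the bookkeeping in the first step --- absorbing the $(\partial_{r}v)^{2}$ and $\partial_{r}v\,W^{r}$ terms, together with $\frac{2}{e^{u}}\partial_{r}H$, into the single conformally weighted total derivative $-2e^{-u-v}\partial_{r}[e^{v-u}(\partial_{r}v+W^{r})]$ --- together with checking that no Laplacian‑of‑lapse term survives (it does not, since each lapse is constant along the slices of the corresponding splitting). A head‑on computation from the Christoffel symbols and Ricci tensor of \eqref{eq:g} is also possible but considerably more painful; the two‑step reduction above is the cleaner route.
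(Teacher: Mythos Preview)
Your argument is correct and is essentially the same as the paper's: the paper also quotes the codimension-one splitting formula $R=R(\eta)-N^{-1}(N^{-1}\operatorname{tr}_\eta\eta')'-\tfrac{1}{4N^2}|\eta'|_\eta^2-\tfrac{1}{4N^2}(\operatorname{tr}_\eta\eta')^2$ (your $|K|^2+H^2+\tfrac{2}{N}\partial_t H$ rewritten) and applies it twice, first to $\rho=e^{2v}d\xi^2+\gamma$ and then to $g=e^{2u}dr^2+\rho$, performing exactly the same repackaging of the $(\partial_r v)^2$ and $\partial_r v\,W^r$ terms into the total derivative $-2e^{-u-v}\partial_r[e^{v-u}(\partial_r v+W^r)]$. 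The only difference is the narrative order of the two splittings and your use of the $(K,H)$ language in place of $\eta'$, which is purely cosmetic.
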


\begin{proof}
For the following metrics
\[
N(s)^{2}ds^{2}+\eta, \quad \eta=\eta_{ab}(s,y^{1},\cdots,y^{m})dy^{a}dy^{b},
\]
it is well-known that their scalar curvature is
\begin{equation}
R(\eta)-N^{-1}\big(N^{-1}\text{tr}_{\eta}\eta^{\prime}\big)^{\prime}-\frac{1}{4N^{2}}\vert\eta^{\prime}\vert_{\eta}^{2}-\frac{1}{4N^{2}}\big(\text{tr}_{\eta}\eta^{\prime}\big)^{2},\label{eq:lem_scalar_curv}
\end{equation}
where $R(\eta)$ is the scalar curvature of $\eta$, $^{\prime}$ is the partial derivative with respect to $s$,
\beQ
\eta^{\prime}=\eta_{ab}^{\prime}dy^{a}dy^{b}.
\eeQ
Now we apply formula (\ref{eq:lem_scalar_curv}) to the following part of $g$
\[
\rho=e^{2v}d\xi^{2}+\gamma,
\]
and obtain
\beQ
\begin{aligned}
R(\rho)  &= R(\gamma)-e^{-v}\partial_{\xi}\big(e^{-v}\text{tr}_{\gamma}\partial_{\xi}\gamma\big)
                          -\frac{1}{4}e^{-2v}\vert\partial_{\xi}\gamma\vert_{\gamma}^{2}-\frac{1}{4}e^{-2v}\big(\text{tr}_{\gamma}\partial_{\xi}\gamma\big)^{2}\\
                        &= R(\gamma)-2e^{-v}\partial_{\xi}\big(e^{-v}W^{\xi}\big)
                          -2e^{-2v}\bigg[\frac{n-1}{2(n-2)}(W^{\xi})^{2}+\frac{1}{8}\vert \mathring{W}^{\xi}\vert_{\gamma}^{2}\bigg].
\end{aligned}
\label{eq:lem_scalar_curv_rho}
\eeQ
Next we apply (\ref{eq:lem_scalar_curv}) to the metric
\[
g=e^{2u}dr^{2}+\rho
\]
and obtain
\[
R(g)=R(\rho)-e^{-u}\partial_{r}\big(e^{-u}\text{tr}_{\rho}\partial_{r}\rho\big)-\frac{1}{4}e^{-2u}\vert\partial_{r}\rho\vert_{\rho}^{2}-\frac{1}{4}e^{-2u}\big(\text{tr}_{\rho}\partial_{r}\rho\big)^{2}.
\]
Since
\begin{align*}
\partial_{r}\rho & =2(\partial_{r}v)e^{2v}d\xi^{2}+\partial_{r}\gamma,\\
\text{tr}_{\rho}\partial_{r}\rho & =2(\partial_{r}v)+\text{tr}_{\gamma}\partial_{r}\gamma=2(\partial_{r}v)+2W^{r},\\
\vert\partial_{r}\rho\vert_{\rho}^{2} & =4(\partial_{r}v)^{2}+\vert\partial_{r}\gamma\vert_{\gamma}^{2}=4(\partial_{r}v)^{2}+\frac{4}{n-2}(W^{r})^{2}+\vert \mathring{W} ^{r}\vert_{\gamma}^{2}.
\end{align*}
we obtain
\begin{align*}
R(g)  ={}& R(\rho)-2e^{-u}\partial_{r}\big[e^{-u}(\partial_{r}v+W^{r})\big]\\
       & -e^{-2u}\bigg[2(\partial_{r}v)^{2}+\frac{n-1}{n-2}(W^{r})^{2}+2\partial_{r}vW^{r}+\frac{1}{4}\vert \mathring{W} ^{r}\vert_{\gamma}^{2}\bigg]\\
      ={}& R(\rho)-2e^{-u-v}\partial_{r}\big[e^{v-u}(\partial_{r}v+W^{r})\big]\\
       & -2e^{-2u}\bigg[\frac{n-1}{2(n-2)}(W^{r})^{2}+\frac{1}{8}\vert \mathring{W} ^{r}\vert_{\gamma}^{2}\bigg].
\end{align*}
Therefore the lemma follows.
\end{proof}

\begin{prop}\label{prop:scalar_curvature_L1}
For the metric (\ref{eq:g}), the condition
\beQ
\big[R(g)+n(n-1)\big]r \in L^{1}(\mathbb{R}^{2}\times\mathbb{T}^{n-2}, g)
\eeQ
holds if and only if
\begin{equation}
\text{tr}_{h_{0}}\theta=0 \Longleftrightarrow u_{n-1}+v_{n-1}+\text{tr}_{h_{0}}w_{n-1}=0.\label{eq:scalar_cur_L1_condition}
\end{equation}
In this case,
\begin{equation}
R+n(n-1)=O\big(\frac{1}{r^{n+1}}\big).\label{eq:scalar_curvature_more_integrability}
\end{equation}
\end{prop}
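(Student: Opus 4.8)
The plan is to produce a sharp asymptotic expansion of $R(g)+n(n-1)$ and then balance its decay against the growth of the Riemannian volume density. I would work in the conformal normal form (\ref{eq:g_normal_expansion}), in which $g=x^{-2}(dx^{2}+h_{x})$ with
\[
h_{x}=h_{0}+\frac{x^{n-1}}{n-1}\,\theta+\frac{x^{n}}{n}\,\kappa+O(x^{n+1}),
\]
$h_{0}$ flat; the decisive structural feature of Horowitz-Myers asymptotics is the \emph{absence} of $x^{k}$-terms with $1\le k\le n-2$. (Equivalently one may substitute (\ref{eq:ag}) and (\ref{eq:gamma_r_derivatives})--(\ref{eq:Wr_hat}) into (\ref{eq:scalar_curvature}); there the two $\xi$-derivative terms and $R(\gamma)=r^{-2}R(r^{-2}\gamma)$ turn out to be $O(r^{-n-1})$, since $W^{\xi}$, $\mathring W^{\xi}$ and the trace-free $O(r^{1-n})$-part of $\partial_{r}\gamma$ decay that fast, while the radial terms reproduce the expansion below after a longer bookkeeping.) I also record the volume density: since $u+v=\hat u+\hat v=O(r^{1-n})$ and, from the expansion of $\hat w$, $\sqrt{\det\gamma}=r^{n-2}\big(1+O(r^{1-n})\big)$, one has $dV_{g}=\big(1+O(r^{1-n})\big)\,r^{n-2}\,dr\,d\xi\,d\phi^{3}\cdots d\phi^{n}$ near infinity.

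Next I would apply the conformal transformation law for scalar curvature to $g=x^{-2}(dx^{2}+h_{x})$. Using $\vert dx\vert_{dx^{2}+h_{x}}\equiv1$ and $\det(dx^{2}+h_{x})=\det h_{x}$, this yields
\[
R(g)+n(n-1)=x^{2}R(dx^{2}+h_{x})+(n-1)\,x\,\partial_{x}\log\det h_{x}.
\]
From the expansion of $h_{x}$, $\partial_{x}\log\det h_{x}=x^{n-2}\mathrm{tr}_{h_{0}}\theta+x^{n-1}\mathrm{tr}_{h_{0}}\kappa+O(x^{n})$, so the second term equals $(n-1)x^{n-1}\mathrm{tr}_{h_{0}}\theta+(n-1)x^{n}\mathrm{tr}_{h_{0}}\kappa+O(x^{n+1})$. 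Applying (\ref{eq:lem_scalar_curv}) to $dx^{2}+h_{x}$ with $N\equiv1$, and using $\mathrm{tr}_{h_{x}}\partial_{x}h_{x}=\partial_{x}\log\det h_{x}$, the absence of low-order terms in $h_{x}$ forces $R(h_{x})$, $\vert\partial_{x}h_{x}\vert^{2}_{h_{x}}$ and $(\partial_{x}\log\det h_{x})^{2}$ to contribute to $x^{2}R(dx^{2}+h_{x})$ only at order $O(x^{n+1})$, so that
\[
x^{2}R(dx^{2}+h_{x})=-x^{2}\,\partial_{x}^{2}\log\det h_{x}+O(x^{n+1})=-(n-2)x^{n-1}\mathrm{tr}_{h_{0}}\theta-(n-1)x^{n}\mathrm{tr}_{h_{0}}\kappa+O(x^{n+1}).
\]
Adding the two contributions, the $x^{n}$-coefficients cancel and the $x^{n-1}$-coefficients combine to $\mathrm{tr}_{h_{0}}\theta$, whence
\[
R(g)+n(n-1)=x^{n-1}\mathrm{tr}_{h_{0}}\theta+O(x^{n+1})=\frac{\mathrm{tr}_{h_{0}}\theta}{r^{n-1}}+O\!\left(\frac{1}{r^{n+1}}\right),
\]
the last equality because $x=r^{-1}+O(r^{-n})$ gives $x^{n-1}=r^{1-n}+O(r^{2-2n})$ with $2n-2\ge n+1$ for $n\ge3$. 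Finally, the definition of $\theta$ following (\ref{eq:g_normal_expansion}) gives at once $\mathrm{tr}_{h_{0}}\theta=2(n-1)\big(u_{n-1}+v_{n-1}+\mathrm{tr}_{h_{0}}w_{n-1}\big)$, which is the equivalence asserted in (\ref{eq:scalar_cur_L1_condition}).

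Then I would multiply: on $\{r\ge R_{0}\}$,
\[
\big[R(g)+n(n-1)\big]\,r\,dV_{g}=\Big(\mathrm{tr}_{h_{0}}\theta(\xi,\phi^{i})+O(r^{-2})\Big)\,dr\,d\xi\,d\phi^{3}\cdots d\phi^{n},
\]
while on the compact region $\{r\le R_{0}\}$ the integrand is continuous, hence bounded. The $O(r^{-2})$ part is integrable on $[R_{0},\infty)\times\mathbb{T}^{n-1}$, whereas the remaining term integrates to $\big(\int_{\mathbb{T}^{n-1}}\vert\mathrm{tr}_{h_{0}}\theta\vert\,dV_{h_{0}}\big)\int_{R_{0}}^{\infty}dr$, which is finite if and only if $\mathrm{tr}_{h_{0}}\theta\equiv0$. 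Therefore $\big[R(g)+n(n-1)\big]r\in L^{1}$ is equivalent to $\mathrm{tr}_{h_{0}}\theta=0$, and in that case the expansion above reduces to $R(g)+n(n-1)=O(r^{-n-1})$, which is (\ref{eq:scalar_curvature_more_integrability}). The main obstacle is justifying the cancellation of the $x^{n}$- (equivalently $r^{-n}$-) coefficient of $R(g)+n(n-1)$: this is exactly where the Horowitz-Myers structure $h_{x}=h_{0}+O(x^{n-1})$ is indispensable, and it is the step demanding the most careful bookkeeping in the asymptotic expansion.
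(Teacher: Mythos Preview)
Your argument is correct, and the asymptotic expansion $R(g)+n(n-1)=x^{n-1}\,\mathrm{tr}_{h_{0}}\theta+O(x^{n+1})$ together with the volume density $dV_{g}=r^{n-2}\big(1+O(r^{1-n})\big)\,dr\,dV_{h_{0}}$ does yield the equivalence and (\ref{eq:scalar_curvature_more_integrability}).

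The route, however, is genuinely different from the paper's. The paper works entirely in the original $(r,\xi,\phi^{i})$ coordinates: it substitutes the asymptotics (\ref{eq:ag}) and (\ref{eq:Wr_hat}) into the explicit scalar-curvature formula (\ref{eq:scalar_curvature}), producing a long expression in $\hat u$, $\hat v$, $\hat W^{r}$, $W^{\xi}$, $\mathring W^{r}$, $\mathring W^{\xi}$, $R(\gamma)$, $f$, and then reads off $R(g)=-n(n-1)+(\mathrm{tr}_{h_{0}}\theta)\,r^{1-n}+O(r^{-n-1})$ term by term. Your approach instead exploits the conformal normal form (\ref{eq:g_normal_expansion}): writing $g=x^{-2}\bar g$ with $\bar g=dx^{2}+h_{x}$ and applying the conformal change of scalar curvature reduces everything to $R(g)+n(n-1)=x^{2}R(\bar g)+(n-1)x\,\partial_{x}\log\det h_{x}$, after which the Fefferman--Graham-type gap $h_{x}=h_{0}+O(x^{n-1})$ makes the cancellation of the $x^{n}$-coefficient transparent. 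What your method buys is a cleaner and more conceptual organization of the cancellations, and it generalizes immediately to any ALH metric written in special-defining-function gauge; what the paper's method buys is independence from (\ref{eq:g_normal_expansion}) (no need to have already computed the normal form), and the intermediate identity it derives for $R(g)$ in terms of $\hat u$, $\hat v$, $\hat W^{r}$ is precisely the formula reused later in the proof of Theorem~\ref{thm:Main}.
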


\begin{proof}
Note if all $\hat{u}$, $\hat{v}$, $\hat{W}^{r}$, $W^{\xi}$,
$\mathring{W} ^{r}$ and $\mathring{W}^{\xi}$ vanish, then $g$ reduces to
\[
e^{-2f}dr^{2}+e^{2f}d\xi^{2}+r^{2}\sum_{i,j=3}^{n}\gamma_{ij}(\phi^{3},\cdots,\phi^{n})d\phi^{i}d\phi^{j},
\]
where
\beQ
f=\ln r+\frac{1}{2}\ln \left(1+\frac{a}{r^{n-1}}-\frac{r_{0}^{n}}{r^{n}}\right).
\eeQ
By (\ref{eq:scalar_curvature}), the scalar curvature of this metric is
\beQ
-n(n-1)+R(\gamma).
\eeQ
Putting (\ref{eq:ag}) and (\ref{eq:Wr_hat}) into (\ref{eq:scalar_curvature}),
we obtain
\begin{align*}
R(g)={} & -n(n-1)e^{-2\hat{u}}+2e^{2f-2\hat{u}}\bigg[-\partial_{r}\big(\partial_{r}\hat{v}+\hat{W}^{r}\big)\\
      &-\partial_{r}\big(\hat{v}-\hat{u}+2f+(n-1)\ln r\big)\cdot\big(\partial_{r}\hat{v}+\hat{W}^{r}\big)\label{eq:scalar_curvature_good_form}\\
      & +\big(\frac{1}{r}-\partial_{r}f\big)\partial_{r}\hat{v}+\big(\partial_{r}f+\frac{n-2}{r}\big)\partial_{r}\hat{u}\\
      & -\frac{(n-1)}{2(n-2)}(\hat{W}^{r})^{2}-\frac{1}{8}\vert \mathring{W} ^{r}\vert_{\gamma}^{2}\bigg] \\
      & -2e^{-v}\partial_{\xi}\big(e^{-v}W^{\xi}\big)+R(\gamma)\\
      & -2e^{-2v}\bigg[\frac{n-1}{2(n-2)}(W^{\xi})^{2}+\frac{1}{8}\vert \mathring{W}^{\xi}\vert_{\gamma}^{2}\bigg]\\
    ={} &-n(n-1)+(\text{tr}_{h_{0}}\theta)r^{1-n}+O(r^{-n-1}).
\end{align*}
Here we use that
\beQ
R(\gamma)=O(r^{-n-1}),
\eeQ
which could be confirmed by direct computation. Since
\beQ
\sqrt{\det g}=O(r^{n-2}),
\eeQ
and
\beQ
\text{tr}_{h_{0}}\theta=2(n-1)(v_{n-1}+u_{n-1}+\text{tr}_{h_{0}}w_{n-1}),
\eeQ
so the lemma follows.
\end{proof}

By (\ref{eq:g_normal_expansion}), we have
\[
g=x^{-2} \bar g, \quad \bar g=dx^{2}+d\xi^{2}+\sum_{i=3}^{n}(d\phi^{i})^{2}+\frac{x^{n-1}}{n-1}\theta+O(x^{n}).
\]
We denote $\phi^{0}=x$ and $\phi^{1}=\xi$. Let $a$, $b$ and $c$ be indices ranging from $1$ to $n$. Under the coordinates $\{\phi^{0},\phi^{1},\cdots,\phi^{n}\}$, it holds
\beQ
\bar{g} _{00}=1, \quad \bar{g} _{0a}=0, \quad \bar{g} _{ab}=\delta_{ab}+\frac{x^{n-1}}{n-1}\theta_{ab}+O(x^{n}).
\eeQ
It is direct to show that the Christoffel symbols of $\bar g$ satisfy
\begin{align*}
\Gamma_{ab}^{c} &= O(x^{n-1}),\\
\Gamma_{ab}^{0} &= -\frac{1}{2}x^{n-2}\theta_{ab}+O(x^{n-1}),\\
\Gamma_{a0}^{b} &= \Gamma_{0a}^{b}=\frac{1}{2}x^{n-2}\theta_{ab}+O(x^{2n-3}),
\end{align*}
and the others vanish. So the Ricci curvature of $\bar{g}$ satisfies
\begin{align*}
\bar{R}_{00} & = -\frac{1}{2}(n-2)x^{n-3}\text{tr}_{h_{0}}\theta+O(x^{2n-4}),\\
\bar{R}_{0a} & = O(x^{n-2}),\\
\bar{R}_{ab} & = O(x^{n-2}).
\end{align*}
Denote by $Ric$ the Ricci curvature of $g$. Let
\[
\Omega=Ric+(n-1)g.
\]
By the formulas of conformal transformation of the Ricci and scalar curvatures, we finally obtain
\begin{align*}
\Omega(\partial_{x},\partial_{x}) & =-\frac{1}{2}(n-3)x^{n-3}\text{tr}_{h_{0}}\theta+O(x^{n-2}),\\
\Omega(\partial_{a},\partial_{b}) & =\frac{1}{2}x^{n-3}(\text{tr}_{h_{0}}\theta)\delta_{ab}+O(x^{n-2}),\\
\Omega(\partial_{x},\partial_{a}) & =O(x^{n-2}).
\end{align*}

Recall Definition 2.1 in \cite{Wo} that the metric $g$ is asymptotically Poincar\'{e}--Einstein
(APE) if
\beQ
\lim _{x \to 0} |\Omega|_{g}=O(x^{n}).
\eeQ
By Lemma 2.2 \cite{Wo}, $g$ is APE if and only if $\text{tr}_{h_{0}}\theta=0$. (This can also be seen from the above computation.)
Therefore we have

\begin{prop}
For the metric (\ref{eq:g}), the condition
\beQ
\big[R(g)+n(n-1)\big]r \in L^{1}(\mathbb{R}^{2}\times\mathbb{T}^{n-2}, g)
\eeQ
holds if and only if $g$ is APE.
\end{prop}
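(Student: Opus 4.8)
The plan is to combine the two preceding propositions. Proposition \ref{prop:scalar_curvature_L1} already establishes that, for the metric \eqref{eq:g}, the integrability condition $[R(g)+n(n-1)]r\in L^{1}$ holds if and only if $\text{tr}_{h_{0}}\theta=0$. The computation carried out immediately before this statement shows, via the conformal transformation formulas for Ricci and scalar curvature applied to $g=x^{-2}\bar g$ with $\bar g$ expanded as in \eqref{eq:g_normal_expansion}, that the tensor $\Omega=Ric+(n-1)g$ has components $\Omega(\partial_{x},\partial_{x})$, $\Omega(\partial_{a},\partial_{b})$, $\Omega(\partial_{x},\partial_{a})$ all of size $O(x^{n-3})\cdot\text{tr}_{h_{0}}\theta$ plus genuine $O(x^{n-2})$ error. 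Hence $|\Omega|_{g}$, measured with respect to $g=x^{-2}\bar g$ (so that raising two indices contributes a factor $x^{4}$), is $O(x^{n-1}\,\text{tr}_{h_{0}}\theta)+O(x^{n})$. Therefore $|\Omega|_{g}=O(x^{n})$ precisely when the coefficient of the $x^{n-1}$ term vanishes, i.e. when $\text{tr}_{h_{0}}\theta=0$; this is also exactly the content of Lemma 2.2 of \cite{Wo}, which we are entitled to invoke.

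First I would state that by Definition 2.1 of \cite{Wo}, $g$ being APE means $|\Omega|_{g}=O(x^{n})$ as $x\to0$. Then I would appeal to Lemma 2.2 of \cite{Wo} (or equivalently read off from the displayed expansion of $\Omega$ above) that this is equivalent to $\text{tr}_{h_{0}}\theta=0$: the leading $x^{n-3}$ coefficients in all components of $\Omega$ are constant multiples of $\text{tr}_{h_{0}}\theta$, with the multiples $-\tfrac12(n-3)$, $\tfrac12$, and $0$ respectively, none of which can conspire to cancel a nonzero $\text{tr}_{h_{0}}\theta$ for $n\geq3$. Finally I would chain this equivalence with Proposition \ref{prop:scalar_curvature_L1}: $[R(g)+n(n-1)]r\in L^{1}\iff \text{tr}_{h_{0}}\theta=0\iff g$ is APE.

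There is essentially no obstacle here — the statement is a two-line corollary once the expansion of $\Omega$ (already done in the excerpt) and Proposition \ref{prop:scalar_curvature_L1} are in hand. The only point requiring a modicum of care is the bookkeeping of powers of $x$ when passing from $|\Omega|_{\bar g}$ to $|\Omega|_{g}$: since $g^{ab}=x^{2}\bar g^{ab}$, contracting two indices of the $(0,2)$-tensor $\Omega$ with $g$ picks up $x^{4}$, turning the $O(x^{n-3})$ components into an $O(x^{n-1})$ contribution to $|\Omega|_{g}$, which then fails the APE threshold $O(x^{n})$ exactly when $\text{tr}_{h_{0}}\theta\neq0$. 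With that check done, the proof is complete: one writes ``By Definition 2.1 and Lemma 2.2 of \cite{Wo}, together with the expansion of $\Omega$ above, $g$ is APE if and only if $\text{tr}_{h_{0}}\theta=0$; combining with Proposition \ref{prop:scalar_curvature_L1} yields the claim.''
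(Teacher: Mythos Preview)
Your proposal is correct and follows essentially the same approach as the paper: the paper simply notes that by Lemma~2.2 of \cite{Wo} (or equivalently the displayed expansion of $\Omega$), $g$ is APE if and only if $\text{tr}_{h_{0}}\theta=0$, and then combines this with Proposition~\ref{prop:scalar_curvature_L1}. Your additional bookkeeping of the $x$-powers in passing from $|\Omega|_{\bar g}$ to $|\Omega|_{g}$ is a helpful elaboration but not a different route.
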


Finally, we study the consequence of smoothness of $g$ at $r=r_{+}$.
\begin{prop}
If the metric $g$ is smooth at $r=r_{+}$, then
\begin{equation}
\hat{v}(r_{+},\xi)=\hat{u}(r_{+})\quad\text{for all }\xi.\label{eq:regularity_cond}
\end{equation}
\end{prop}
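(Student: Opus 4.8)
The plan is to work in a neighbourhood of the degenerate locus $\{r=r_{+}\}$, where the circle parametrized by $\xi$ collapses, and to extract the condition imposed by requiring that $g$ extend smoothly across it. Write $F(r)=1+\frac{a}{r^{n-1}}-\frac{r_{0}^{n}}{r^{n}}$, so that $F(r_{+})=0$; differentiating and eliminating $a$ by means of $F(r_{+})=0$ gives $F'(r_{+})=\frac{1}{r_{+}}\big(n-1+\frac{r_{0}^{n}}{r_{+}^{n}}\big)$, whence by (\ref{eq:period_xi})
\[
r_{+}^{2}F'(r_{+})=r_{+}\Big(n-1+\frac{r_{0}^{n}}{r_{+}^{n}}\Big)=\frac{4\pi}{\beta}.
\]
Since $g$ has no $dr\,d\phi^{i}$ or $d\xi\,d\phi^{i}$ cross terms, and the $\phi^{i}$-block $\gamma$ has coefficients that are smooth and non-degenerate up to $r=r_{+}$ (because $r^{2}\to r_{+}^{2}>0$), smoothness of $g$ on $\mathbb{R}^{2}\times\mathbb{T}^{n-2}$ implies that the two-dimensional metric obtained by restricting $g$ to a slice $\{\phi^{i}=\mathrm{const}\}$, namely
\[
g_{(2)}=e^{2u(r)}dr^{2}+e^{2v(r,\xi)}d\xi^{2}=\frac{e^{2\hat u(r)}}{r^{2}F(r)}\,dr^{2}+r^{2}F(r)\,e^{2\hat v(r,\xi)}\,d\xi^{2},
\]
is a smooth Riemannian metric near the origin $r=r_{+}$ of the $\mathbb{R}^{2}$-factor.

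Next I would introduce the coordinate $\rho(r)=\int_{r_{+}}^{r}e^{u(s)}\,ds=\int_{r_{+}}^{r}\frac{e^{\hat u(s)}}{s\sqrt{F(s)}}\,ds$, so that $e^{2u}dr^{2}=d\rho^{2}$; since $F(s)\sim F'(r_{+})(s-r_{+})$ near $r_{+}$, the integral converges and $\rho(r)=\frac{2e^{\hat u(r_{+})}}{r_{+}\sqrt{F'(r_{+})}}\sqrt{r-r_{+}}\,(1+o(1))$ as $r\to r_{+}^{+}$. Setting $\psi=\frac{2\pi}{\beta}\xi$, the angular coordinate of period $2\pi$ on the $\mathbb{R}^{2}$-factor (the coordinate $\xi$ being $\beta$-periodic), we obtain $g_{(2)}=d\rho^{2}+\Phi(\rho,\psi)^{2}d\psi^{2}$ with $\Phi=\frac{\beta}{2\pi}e^{v(r(\rho),\xi)}=\frac{\beta}{2\pi}\,r\sqrt{F(r)}\,e^{\hat v(r,\xi)}$. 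For $g_{(2)}$ to be smooth at the origin it must in particular extend continuously there, and this forces $\Phi(\rho,\psi)/\rho\to1$ as $\rho\to0$ for each fixed $\psi$. Using $r\sqrt{F(r)}\sim r_{+}\sqrt{F'(r_{+})}\sqrt{r-r_{+}}$ and the asymptotics of $\rho$ above,
\[
\lim_{\rho\to0}\frac{\Phi}{\rho}=\frac{\beta}{2\pi}\cdot\frac{r_{+}^{2}F'(r_{+})}{2e^{\hat u(r_{+})}}\,e^{\hat v(r_{+},\xi)}=\frac{e^{\hat v(r_{+},\xi)}}{e^{\hat u(r_{+})}},
\]
the last equality by $r_{+}^{2}F'(r_{+})=4\pi/\beta$. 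Equating this limit to $1$ gives $e^{\hat v(r_{+},\xi)}=e^{\hat u(r_{+})}$ for every $\xi$, that is, (\ref{eq:regularity_cond}).

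The arithmetic is elementary once the identity $r_{+}^{2}F'(r_{+})=4\pi/\beta$ — equivalently, the known value (\ref{eq:period_xi}) of the period — is available; the point that needs care is the reduction to $g_{(2)}$ together with the smoothness criterion, i.e. making precise that smoothness of $g$ at $r=r_{+}$ forces the leading-order matching $\Phi(\rho,\psi)\sim\rho$ with the flat model and that no cancellation can hide this obstruction. This is the standard "$2\pi$ cone-angle" condition for a surface at a point where a circle shrinks; it can be justified by passing to Cartesian coordinates $(y^{1},y^{2})=(\rho\cos\psi,\rho\sin\psi)$ and observing that $g_{(2)}(\partial_{y^{1}},\partial_{y^{1}})$ has no direction-independent limit at the origin unless $\Phi/\rho\to1$ there. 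Note that neither $\beta_{0}=\beta$ nor the asymptotic expansions (\ref{eq:ag}) at infinity enter this argument — only $F(r_{+})=0$ and the period of $\xi$.
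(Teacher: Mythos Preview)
Your argument is correct and follows essentially the same route as the paper: restrict to the two-dimensional block $g_{(0)}=e^{2u}dr^{2}+e^{2v}d\xi^{2}$, pass to a radial/angular coordinate system near the collapsing circle, and read off the cone-angle condition. The only differences are cosmetic: the paper takes $\rho=\sqrt{A(r)}$ with $A(r)=r^{2}F(r)$ and decomposes $g_{(0)}=g_{(\rho)}+g_{(\Xi)}+g_{\mathbb{R}^{2}}$, isolating the piece $[e^{2(\hat v-\hat u(r_{+}))}-1]\rho^{2}d\Xi^{2}$ whose coefficient must vanish at $r_{+}$; you take $\rho$ to be the geodesic distance and invoke $\Phi/\rho\to 1$ directly. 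Both lead to the same limit $e^{\hat v(r_{+},\xi)}/e^{\hat u(r_{+})}=1$ via $r_{+}^{2}F'(r_{+})=A'(r_{+})=4\pi/\beta$.

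One small point worth tightening: your step ``$g_{(2)}(\partial_{y^{1}},\partial_{y^{1}})$ must have a direction-independent limit'' presupposes that $(y^{1},y^{2})=(\rho\cos\psi,\rho\sin\psi)$ are \emph{smooth} coordinates at the origin, which is not automatic for an arbitrary period-$2\pi$ angular parameter $\psi$. The paper handles exactly this issue by quoting \cite{LZ} for the smoothness of its $(z^{1},z^{2})$ (built from $\sqrt{A(r)}$ and the same angle $\Xi=\psi$). Once that is known, your $(y^{1},y^{2})$ differ from $(z^{1},z^{2})$ only by a radial rescaling depending on $r$ alone (since $u=u(r)$), which forces the linearization at the origin to be conformal; hence $(y^{1},y^{2})$ are smooth too and your conclusion is justified. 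With that caveat noted, the two proofs are the same.
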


\begin{proof}
We follow the proof of Theorem II.1 in \cite{LZ} to construct a regular coordinate system around $r=r_{+}$. Set
\[
A(r)=r^{2}\bigg(1+\frac{a}{r^{n-1}}-\frac{r_{0}^{n}}{r^{n}}\bigg)
\]
for $r>0$. Differentiating $A$ at $r=r_{+}$ and using (\ref{eq:period_xi}), we find
\[
A^{\prime}(r_{+})=\frac{4\pi}{\beta}>0.
\]
This implies that $A$ has a smooth inverse, which we denote by $B$,  round $r_{+}$. Clearly $B(0)=r_{+}$. Now we can introduce new coordinates
$(\rho,\Xi)$ given by
\beQ
\rho=A^{\frac{1}{2}}(r), \quad \Xi=\frac{2\pi\xi}{\beta}
\eeQ
for $r_{+}<r<r_{+}+\varepsilon$, $0 \leq \Xi <2\pi$, with some $\varepsilon>0$.
So $(\rho,\Xi)$ can be viewed as the standard polar coordinates around the origin in $\mathbb{R}^{2}$.
Set
\beQ
z^{1}=\rho\cos\Xi, \quad z^{2}=\rho\sin\Xi.
\eeQ
Obviously, $\{r=r_{+}\}=\{z^{1}=z^{2}=0\}$.
It is shown in \cite{LZ} that $(z^{1},z^{2},\phi^{3},\cdots,\phi^{n})$
is a regular coordinate system around $r=r_{+}$ for $\mathring{g}$.
Since $g$ is smooth, the components in this coordinate system are
smooth around $(z^{1},z^{2})=(0,0)$. For our purpose, we only rewrite
the following part
\[
g_{(0)}:=e^{2\hat{u}(r)}A^{-1}(r)dr^{2}+e^{2\hat{v}(r,\xi)}A(r)d\xi^{2}
\]
of $g$ in the new coordinate system. Since
\[
A^{-1}(r)dr^{2}=\frac{4}{A^{\prime}(r)^{2}}d\rho^{2},\quad  A(r)d\xi^{2}=\frac{4\rho^{2}}{A^{\prime}(r_{+})^{2}}d\Xi^{2},
\]
we have
\[
g_{(0)}=\frac{4e^{2\hat{u}(r)}}{A^{\prime}(r)^{2}}d\rho^{2}+\frac{4e^{2\hat{v}(r,\xi)}}{A^{\prime}(r_{+})^{2}}\rho^{2}d\Xi^{2}=g_{(\rho)}+g_{(\Xi)}+g_{\mathbb{R}^{2}}
\]
where
\begin{align*}
g_{(\rho)}  &= \bigg[\frac{4e^{2\hat{u}(r)}}{A^{\prime}(r)^{2}}-\frac{4e^{2\hat{u}(r_{+})}}{A^{\prime}(r_{+})^{2}}\bigg]d\rho^{2},\\
g_{(\Xi)} &= \frac{4e^{2\hat{u}(r_{+})}}{A^{\prime}(r_{+})^{2}}\bigg[e^{2(\hat{v}(r,\xi)-\hat{u}(r_{+}))}-1\bigg]\rho^{2}d\Xi^{2},\\
g_{\mathbb{R}^{2}} &=\frac{4e^{2\hat{u}(r_{+})}}{A^{\prime}(r_{+})^{2}}(d\rho^{2}+\rho^{2}d\Xi^{2}).
\end{align*}
Since $\frac{4e^{2\hat{u}(r)}}{A^{\prime}(r)^{2}}$ is smooth near
$r=r_{+}$, there exists a smooth function $\varphi(r)$ so that
\[
\frac{4e^{2\hat{u}(r)}}{A^{\prime}(r)^{2}}-\frac{4e^{2\hat{u}(r_{+})}}{A^{\prime}(r_{+})^{2}}=(r-r_{+})\varphi(r)
\]
for $r$ near $r_{+}$. Note that
\[
(r-r_{+})\varphi(r)=\big[B(\rho^{2})-B(0)\big](\varphi\circ B)(\rho^{2}).
\]
Since $B$ is smooth near $0$, there also exists a smooth function
$\psi(\rho)$ such that
\beQ
B(\rho^{2})-B(0)=\rho^{2}\psi(\rho^{2}).
\eeQ
So
\begin{align*}
g_{(\rho)} & =(\varphi\circ B)(\rho^{2})\psi(\rho^{2})\rho^{2}d\rho^{2}\\
 & =((\varphi\circ B)\cdot\psi)((z^{1})^{2}+(z^{2})^{2})\big(z^{1}dz^{1}+z^{2}dz^{2}\big)^{2}.
\end{align*}
This implies that $g_{(\rho)}$ is smooth around $(z^{1},z^{2})=(0,0)$.
As a result, $g_{(\Xi)}$ is also smooth around $(z^{1},z^{2})=(0,0)$.
However,
\begin{align*}
\rho^{2}d\Xi^{2} & =(dz^{1})^{2}+(dz^{2})^{2}-d\rho^{2}\\
 & =(dz^{1})^{2}+(dz^{2})^{2}-\frac{1}{\rho^{2}}\big(z^{1}dz^{1}+z^{2}dz^{2}\big)^{2}
\end{align*}
is singular at $(z^{1},z^{2})=(0,0)$. So it must hold that
\beQ
\lim_{r\rightarrow r_{+}}\hat{v}(r,\xi)=\hat{u}(r_{+})
\eeQ
for all $\xi$. This proves (\ref{eq:regularity_cond}).
\end{proof}

\section{Proof of the theorem}

In this section, we adopt the idea in \cite{BCHMN} to prove Theorem \ref{thm:Main}. We will replace $r$ by a new coordinate function $\tilde{r}$
to change $e^{2u(r)}dr^{2}$ in (\ref{eq:g}) to $\tilde{r}^{-2}\bigg(1-\frac{\tilde{r}_{0}^{n}}{\tilde{r}^{n}}\bigg)^{-1}d\tilde{r}^{2}$.
\begin{lem}
There exist a positive number $\tilde{r}_{0}$ and a smooth increasing
function $r\mapsto\tilde{r}(r)$ from $[r_{+},\infty)$ to $[\tilde{r}_{0},\infty)$,
with $\tilde{r}(r_{+})=\tilde{r}_{0}$, such that
\begin{equation}
\frac{1}{\tilde{r}\sqrt{1-\frac{\tilde{r}_{0}^{n}}{\tilde{r}^{n}}}}d\tilde{r}=\frac{1}{r\sqrt{1+\frac{a}{r^{n-1}}-\frac{r_{0}^{n}}{r^{n}}}}e^{\hat{u}(r)}dr,\label{eq:change_coordinates_grr}
\end{equation}
and
\begin{equation}
\tilde{r}(r)=r+\frac{a-2u_{n-1}}{2(n-1)}r^{2-n}+\frac{\tilde{r}_{0}^{n}-r_{0}^{n}-2u_{n}}{2n}r^{1-n}+O(r^{-n}).\label{eq:change_coordinates_new_r}
\end{equation}
\end{lem}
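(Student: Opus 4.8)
The plan is to integrate (\ref{eq:change_coordinates_grr}) on both sides and invert. For a parameter $\tilde{r}_{0}>0$ set
\[
\Phi_{\tilde{r}_{0}}(\tilde{r})=\int_{\tilde{r}_{0}}^{\tilde{r}}\frac{ds}{\sqrt{s^{2}-\tilde{r}_{0}^{n}s^{2-n}}},\qquad
\Psi(r)=\int_{r_{+}}^{r}\frac{e^{\hat{u}(s)}\,ds}{\sqrt{A(s)}},\quad A(s)=s^{2}+as^{3-n}-r_{0}^{n}s^{2-n}.
\]
Since $r_{+}$ is the largest root of $1+a/r^{n-1}-r_{0}^{n}/r^{n}=0$, we have $A>0$ on $(r_{+},\infty)$, while $A(r_{+})=0$ and $A'(r_{+})=4\pi/\beta>0$, so $A$ has a simple zero at $r_{+}$; likewise $s^{2}-\tilde{r}_{0}^{n}s^{2-n}$ has a simple zero at $\tilde{r}_{0}$. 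Hence both integrands are positive on the open intervals and blow up only like $(s-r_{+})^{-1/2}$, resp.\ $(s-\tilde{r}_{0})^{-1/2}$, at the left endpoints, so $\Phi_{\tilde{r}_{0}}$ and $\Psi$ are finite, continuous and strictly increasing; as both integrands are $\sim 1/s$ at infinity, $\Phi_{\tilde{r}_{0}}\colon[\tilde{r}_{0},\infty)\to[0,\infty)$ and $\Psi\colon[r_{+},\infty)\to[0,\infty)$ are bijections. Defining $\tilde{r}(r):=\Phi_{\tilde{r}_{0}}^{-1}(\Psi(r))$ then gives a strictly increasing bijection $[r_{+},\infty)\to[\tilde{r}_{0},\infty)$ with $\tilde{r}(r_{+})=\tilde{r}_{0}$, and differentiating $\Phi_{\tilde{r}_{0}}(\tilde{r}(r))=\Psi(r)$ produces exactly (\ref{eq:change_coordinates_grr}). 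This works for every $\tilde{r}_{0}>0$; the correct value will be fixed below by (\ref{eq:change_coordinates_new_r}).

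For smoothness I would substitute $s=r_{+}+t^{2}$, resp.\ $s=\tilde{r}_{0}+t^{2}$. Writing $A(s)=(s-r_{+})\tilde{A}(s)$ with $\tilde{A}$ smooth and $\tilde{A}(r_{+})=A'(r_{+})>0$, one gets $\Psi(r)=\int_{0}^{\sqrt{r-r_{+}}}2\,e^{\hat{u}(r_{+}+t^{2})}\tilde{A}(r_{+}+t^{2})^{-1/2}\,dt$, which is a smooth \emph{odd} function of $\tau:=\sqrt{r-r_{+}}$ with nonzero derivative at $\tau=0$; hence $\Psi(r)^{2}=(r-r_{+})H(r)$ for a smooth $H$ with $H(r_{+})>0$. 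The identical argument shows $\tilde{r}=G\big(\Phi_{\tilde{r}_{0}}(\tilde{r})^{2}\big)$ for a smooth $G$ with $G'(0)>0$ (this is the mechanism used in \cite{LZ} to build regular coordinates around $r=r_{+}$). Therefore $\tilde{r}(r)=G(\Psi(r)^{2})=G\big((r-r_{+})H(r)\big)$ is smooth near $r_{+}$, smoothness away from $r_{+}$ being clear from (\ref{eq:change_coordinates_grr}); and $d\tilde{r}/dr=G'(0)H(r_{+})>0$ at $r=r_{+}$, so $\tilde{r}$ is smooth and strictly increasing on all of $[r_{+},\infty)$.

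Finally, to choose $\tilde{r}_{0}$ and prove (\ref{eq:change_coordinates_new_r}), I expand the integrand of $\Psi$ for large $s$ — this is the computation already performed for $\ln x$ in Section~2 — obtaining $\Psi(r)=\ln r+K-\frac{u_{n-1}-a/2}{n-1}r^{1-n}-\frac{u_{n}+r_{0}^{n}/2}{n}r^{-n}+O(r^{-n-1})$, where $K:=\lim_{r\to\infty}(\Psi(r)-\ln r)$ is a finite constant determined by $a,r_{0},r_{+},\hat{u}$. Similarly $\Phi_{\tilde{r}_{0}}(\tilde{r})=\ln\tilde{r}+\tilde{K}(\tilde{r}_{0})-\frac{\tilde{r}_{0}^{n}}{2n}\tilde{r}^{-n}+O(\tilde{r}^{-2n})$ with $\tilde{K}(\tilde{r}_{0})=-\ln\tilde{r}_{0}+c_{n}$ for a constant $c_{n}$ independent of $\tilde{r}_{0}$; since $\tilde{r}_{0}\mapsto\tilde{K}(\tilde{r}_{0})$ is a decreasing bijection of $(0,\infty)$ onto $\mathbb{R}$, there is a (unique) $\tilde{r}_{0}>0$ with $\tilde{K}(\tilde{r}_{0})=K$. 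For that $\tilde{r}_{0}$, comparing the two expansions in $\Phi_{\tilde{r}_{0}}(\tilde{r}(r))=\Psi(r)$ first forces $\ln\tilde{r}(r)=\ln r+O(r^{1-n})$, i.e.\ $\tilde{r}(r)=r+O(r^{2-n})$; writing $\tilde{r}(r)=r+\delta(r)$ with $\delta=O(r^{2-n})$ and re-substituting, all cross terms are $O(r^{-n-1})$ when $n\geq3$, so matching the $r^{1-n}$ and $r^{-n}$ coefficients yields exactly (\ref{eq:change_coordinates_new_r}).

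The soft part (existence and inversion) and the asymptotic matching (a finite Taylor expansion) are routine; the one genuinely delicate point is the smoothness at $r=r_{+}$, where the $(s-r_{+})^{-1/2}$ singularity of the integrand must be resolved — this is handled by the square-root substitution together with the parity observation that $\Psi$ and $\Phi_{\tilde{r}_{0}}$, viewed as functions of the respective "polar radii", are odd with nonvanishing first derivative at the origin.
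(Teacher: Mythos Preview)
Your proof is correct and follows essentially the same approach as the paper: integrate both sides of the ODE (your $\Phi_{\tilde r_{0}}(\tilde r)$ is exactly the paper's $F(\tilde r/\tilde r_{0})$ after the scaling $s\mapsto \tilde r_{0}s$, and your $\Psi$ is the paper's right-hand integral), invert, and fix $\tilde r_{0}$ by matching the constant terms in the large-$r$ expansions. Your treatment of smoothness at $r=r_{+}$ via the square-root substitution and the parity argument is in fact more careful than the paper's proof, which simply asserts that $F$ is ``smooth increasing'' without addressing the half-power singularity of $F'$ at the lower endpoint.
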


\begin{proof}
Define
\[
F(r)=\int_{1}^{r}\frac{1}{s\sqrt{1-\frac{1}{s^{n}}}}ds, \quad r\geq 1.
\]
For large $r$, we have
\begin{equation}
F(r)=F_{0}+\ln r-\frac{1}{2n}r^{-n}+O(r^{-n-1}),\label{eq:change_coordinates_F}
\end{equation}
where
\[
F_0=\int_{1}^{\infty}\frac{1}{s}\left(\frac{1}{\sqrt{1-\frac{1}{s^{n}}}}-1\right)ds.
\]
Clearly, $F$ is a smooth increasing function. Define $\tilde{r}(r)$ on $[r_{+},\infty)$ by
\begin{equation}
F\bigg(\frac{\tilde{r}(r)}{\tilde{r}_{0}}\bigg)=\int_{r_{+}}^{r}\frac{e^{\hat{u}(s)}}{s\sqrt{1+\frac{a}{s^{n-1}}-\frac{r_{0}^{n}}{s^{n}}}}ds.\label{eq:change_coordinates_new_r_1}
\end{equation}
Here $\tilde{r}_{0}$ is a positive number to be specified later.
The integral on the right hand side has the following asymptotic expansion
as $r\to\infty$:
\begin{equation}
C+\ln r+\frac{a-2u_{n-1}}{2(n-1)}r^{1-n}-\frac{2u_{n}+r_{0}^{n}}{2n}r^{-n}+O(r^{-n-1}),\label{eq:change_coordinates_expansion}
\end{equation}
where
\begin{align*}
C=&\lim_{r\to\infty}\bigg(\int_{r_{+}}^{r}\frac{1}{s\sqrt{1+\frac{a}{s^{n-1}}-\frac{r_{0}^{n}}{s^{n}}}}ds-\ln r\bigg)\\
 =&-\ln r_{+}+\int_{r_{+}}^{\infty}\frac{1}{s}\left(\frac{1}{\sqrt{1+\frac{a}{s^{n-1}}-\frac{r_{0}^{n}}{s^{n}}}}-1\right)ds.
\end{align*}
Putting (\ref{eq:change_coordinates_F}) and (\ref{eq:change_coordinates_expansion})
into (\ref{eq:change_coordinates_new_r_1}), we obtain
\begin{align*}
F_{0}-&\ln\tilde{r}_{0}+\ln\tilde{r}-\frac{\tilde{r}_{0}^{n}}{2n}\tilde{r}^{-n}+O(\tilde{r}^{-n-1})\\
     =&\; C+\ln r+\frac{a-2u_{n-1}}{2(n-1)}r^{1-n}-\frac{2u_{n}+r_{0}^{n}}{2n}r^{-n}+O(r^{-n-1}).
\end{align*}
Now we set
\[
\tilde{r}_{0}=e^{F_{0}-C}.
\]
The above equation implies the asymptotic expansion (\ref{eq:change_coordinates_new_r}).
The equation (\ref{eq:change_coordinates_grr}) follows directly by
the definition of $\tilde{r}$.
\end{proof}

With the function $\tilde{r}$, we rewrite the metric
\begin{equation}
g=e^{2\tilde{u}(\tilde{r})}d\tilde{r}^{2}+e^{2\tilde{v}(\tilde{r},\xi)}d\xi^{2}+\tilde{r}^{2}\bigg(\sum_{i=3}^{n}(d\phi^{i})^2+\hat{\tilde{w}}(\tilde{r}, \xi, \phi^i)\bigg),\label{eq:g_rewrite_new_coordinates}
\end{equation}
where
\beq
\begin{aligned}
\tilde{u}(\tilde{r}) & =-\ln\tilde{r}-\frac{1}{2}\ln\bigg(1-\frac{\tilde{r}_{0}^{n}}{\tilde{r}^{n}}\bigg),\\
\tilde{v}(\tilde{r},\xi) & =-\tilde{u}(\tilde{r})+\hat{\tilde{v}}(\tilde{r},\xi),\\
e^{\hat{\tilde{v}}(\tilde{r},\xi)}& =1+\frac{\tilde{v}_{n-1}(\xi)}{\tilde{r}^{n-1}}+\frac{\tilde{v}_{n}(\xi)}{\tilde{r}^{n}}+O\big(\frac{1}{\tilde{r}^{n+1}}\big),\\
\hat{\tilde{w}}(\tilde{r}, \xi, \phi^i) & =\frac{2\tilde{w}_{n-1}}{\tilde{r}^{n-1}}+\frac{2\tilde{w}_{n}}{\tilde{r}^{n}}+O\big(\frac{1}{\tilde{r}^{n+1}}\big). \label{eq:v_new_coordinates}
\end{aligned}
\eeq
By comparing (\ref{eq:g}) and (\ref{eq:g_rewrite_new_coordinates}),
we have
\begin{equation}
\hat{\tilde{v}}=\hat{v}+\hat{u}-\ln\frac{d\tilde{r}}{dr}\label{eq:v_title}
\end{equation}
and
\begin{equation}
\hat{\tilde{w}}=\frac{r^{2}}{\tilde{r}^{2}}\hat{w}+\bigg(\frac{r^{2}}{\tilde{r}^{2}}-1\bigg)\sum_{i=3}^{n}(d\phi^{i})^{2}.\label{eq:w_title}
\end{equation}
In particular, we obtain
\begin{equation}
\begin{aligned}\tilde{v}_{n-1} & =v_{n-1}+\frac{(n-2)a+2u_{n-1}}{2(n-1)},\\
\tilde{v}_{n} & =v_{n}+\frac{(n-1)(\tilde{r}_{0}^{n}-r_{0}^{n})+2u_{n}}{2n},\\
\tilde{w}_{n-1} & =w_{n-1}+\frac{2u_{n-1}-a}{2(n-1)}\sum_{i=3}^{n}(d\phi^{i})^{2},\\
\tilde{w}_{n} & =w_{n}+\frac{-(\tilde{r}_{0}^{n}-r_{0}^{n})+2u_{n}}{2n}\sum_{i=3}^{n}(d\phi^{i})^{2}.
\end{aligned}
\label{eq:change_v_and_w}
\end{equation}
As a result, (\ref{eq:scalar_cur_L1_condition})
becomes
\begin{equation}
\tilde{v}_{n-1}+\text{tr}_{h_{0}}\tilde{w}_{n-1}=0,\label{eq:R+n(n-1)_integrable_new_coordinates}
\end{equation}
and (\ref{eq:mass_diff}) becomes
\begin{equation}
\begin{aligned}
E(g)-E(g_{\text{HM}})=\big(\breve{r}_{0}^{n}-\tilde{r}_{0}^{n}\big)\text{Vol}(\partial_{\infty}M,h_{0})
                      +2n\int_{\partial_{\infty}M}\big(\tilde{v}_{n}+\text{tr}_{h_{0}}\tilde{w}_{n}\big)dV_{h_{0}}.\label{eq:mass_diff_2}
\end{aligned}
\end{equation}

\begin{lem}\label{lem:v_tilde_hat_r0}
Under the above condition, it holds that
\begin{equation}
e^{\hat{\tilde{v}}(\tilde{r}_{0},\xi)}=\frac{\breve{r}_{0}}{\tilde{r}_{0}}   \label{eq:v_tilde_hat_r0}
\end{equation}
for all $\xi$.
\end{lem}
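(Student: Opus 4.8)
The plan is to trace the value of the $d\xi^{2}$--coefficient of $g$ at the inner boundary through the change of coordinate $r\mapsto\tilde{r}$. By (\ref{eq:v_title}) we have
\[
e^{\hat{\tilde{v}}(\tilde{r},\xi)}=\frac{e^{\hat{v}(r,\xi)}\,e^{\hat{u}(r)}}{\,d\tilde{r}/dr\,},
\]
and since $\tilde{r}(r_{+})=\tilde{r}_{0}$, while the regularity condition (\ref{eq:regularity_cond}) gives $\hat{v}(r_{+},\xi)=\hat{u}(r_{+})$ for every $\xi$, letting $r\to r_{+}$ reduces the statement to computing
\[
L:=\lim_{r\to r_{+}}\frac{d\tilde{r}}{dr}
\]
and checking that $L=(\tilde{r}_{0}/\breve{r}_{0})\,e^{2\hat{u}(r_{+})}$; granting this, $e^{\hat{\tilde{v}}(\tilde{r}_{0},\xi)}=e^{2\hat{u}(r_{+})}/L=\breve{r}_{0}/\tilde{r}_{0}$, which is exactly (\ref{eq:v_tilde_hat_r0}).

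To compute $L$, set $A(r)=r^{2}\big(1+\frac{a}{r^{n-1}}-\frac{r_{0}^{n}}{r^{n}}\big)$ and $\tilde{A}(\tilde{r})=\tilde{r}^{2}\big(1-\frac{\tilde{r}_{0}^{n}}{\tilde{r}^{n}}\big)$. Squaring (\ref{eq:change_coordinates_grr}) and clearing denominators gives the identity
\[
\Big(\frac{d\tilde{r}}{dr}\Big)^{2}A(r)=\tilde{A}(\tilde{r})\,e^{2\hat{u}(r)}.
\]
Both $A(r_{+})=0$ and $\tilde{A}(\tilde{r}_{0})=0$, so this is an indeterminate identity at $r=r_{+}$; dividing by $r-r_{+}$ and letting $r\to r_{+}$ — here one uses that $\tilde{r}$ is smooth and strictly increasing at $r_{+}$ (the preceding Lemma), so that $(\tilde{r}-\tilde{r}_{0})/(r-r_{+})\to L>0$ — turns it into
\[
L^{2}A'(r_{+})=\tilde{A}'(\tilde{r}_{0})\,L\,e^{2\hat{u}(r_{+})},\qquad\text{hence}\qquad L=\frac{\tilde{A}'(\tilde{r}_{0})}{A'(r_{+})}\,e^{2\hat{u}(r_{+})}.
\]
A one-line differentiation gives $\tilde{A}'(\tilde{r}_{0})=n\tilde{r}_{0}$, and, as already computed in the proof of the Proposition yielding (\ref{eq:regularity_cond}) (from (\ref{eq:period_xi}) and $1+\frac{a}{r_{+}^{n-1}}-\frac{r_{0}^{n}}{r_{+}^{n}}=0$), $A'(r_{+})=4\pi/\beta$. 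Combining this with the hypothesis (\ref{eq:beta0=beta}) and $\beta_{0}=4\pi/(n\breve{r}_{0})$ from (\ref{beta0}) yields $A'(r_{+})=n\breve{r}_{0}$, so $L=(\tilde{r}_{0}/\breve{r}_{0})\,e^{2\hat{u}(r_{+})}$, as required.

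The only delicate point is the justification that $L$ exists as a positive limit, i.e. the $C^{1}$ (indeed $C^{\infty}$) regularity of $\tilde{r}(r)$ up to $r=r_{+}$; this is part of the preceding Lemma and can be seen, exactly as in the proof of the Proposition yielding (\ref{eq:regularity_cond}) and as in \cite{LZ}, by passing to the polar-type coordinates in which $e^{2\tilde{u}}d\tilde{r}^{2}+e^{2\tilde{v}}d\xi^{2}$ closes up smoothly on the central torus. Alternatively one may avoid differentiating the indeterminate identity by expanding each square root in (\ref{eq:change_coordinates_grr}) near its zero, $1-\frac{\tilde{r}_{0}^{n}}{\tilde{r}^{n}}\sim\frac{n}{\tilde{r}_{0}}(\tilde{r}-\tilde{r}_{0})$ and $1+\frac{a}{r^{n-1}}-\frac{r_{0}^{n}}{r^{n}}\sim\frac{A'(r_{+})}{r_{+}^{2}}(r-r_{+})$, and passing to the limit in $\sqrt{(\tilde{r}-\tilde{r}_{0})/(r-r_{+})}\to\sqrt{L}$; the remaining steps are routine algebra.
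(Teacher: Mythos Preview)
Your argument is correct and follows essentially the same route as the paper's own proof: both start from (\ref{eq:v_title}) together with (\ref{eq:regularity_cond}) to reduce the claim to computing $(d\tilde{r}/dr)(r_{+})$, and both obtain this derivative by passing to the limit in the squared relation coming from (\ref{eq:change_coordinates_grr}), using $A'(r_{+})=4\pi/\beta$ and (\ref{eq:beta0=beta}). Your write-up is in fact slightly more careful in isolating and justifying the positivity of the limit $L=(d\tilde{r}/dr)(r_{+})$, a point the paper uses implicitly.
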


\begin{proof}
By (\ref{eq:v_title}) and (\ref{eq:regularity_cond}), we have
\[
e^{\hat{\tilde{v}}(\tilde{r}_{0},\xi)}=\frac{e^{2\hat{u}(r_{+})}}{\frac{d\tilde{r}}{dr}(r_{+})}.
\]
Since (\ref{eq:change_coordinates_grr}) gives that
\[
\bigg(\frac{d\tilde{r}}{dr}\bigg)^{2}=\frac{\tilde{r}^{2}\bigg(1-\frac{\tilde{r}_{0}^{n}}{\tilde{r}^{n}}\bigg)}{r^{2}\bigg(1+\frac{a}{r^{n-1}}-\frac{r_{0}^{n}}{r^{n}}\bigg)}e^{2\hat{u}(r)},
\]
it follows
\begin{align*}
\bigg(\frac{d\tilde{r}}{dr}\bigg)^{2}(r_{+}) & =\frac{\tilde{r}_{0}^{2}}{r_{+}^{2}}e^{2\hat{u}(r_{+})}\lim_{r\rightarrow r_{+}}\frac{1-\frac{\tilde{r}_{0}^{n}}{\tilde{r}^{n}}}{1+\frac{a}{r^{n-1}}-\frac{r_{0}^{n}}{r^{n}}}=\frac{n\tilde{r}_{0}e^{2\hat{u}(r_{+})}\frac{d\tilde{r}}{dr}(r_{+})}{-\frac{(n-1)a}{r_{+}^{n-2}}+\frac{r_{0}^{n}}{r_{+}^{n-1}}}.
\end{align*}
Recalling that $r_{+}$ is a root of $1+\frac{a}{r^{n-1}}-\frac{r_{0}^{n}}{r^{n}}=0$,
this implies
\[
\frac{d\tilde{r}}{dr}(r_{+})=e^{2\hat{u}(r_{+})}\frac{n\tilde{r}_{0}}{r_{+}\big(n-1+\frac{1}{r_{+}^{n}}\big)}.
\]
By (\ref{eq:beta0=beta}), we obtain
\[
\frac{d\tilde{r}}{dr}(r_{+})=e^{2\hat{u}(r_{+})}\frac{\tilde{r}_{0}}{\breve{r}_{0}}.
\]
So the lemma follows.
\end{proof}

Now we are ready to prove Theorem \ref{thm:Main}.

\begin{proof}[Proof of Theorem \ref{thm:Main}]

Denote $\tilde{W}^{r}$, $\hat{\tilde{W}}^{r}$, $\tilde{\mathring{W}}^{r}$ for (\ref{eq:g_rewrite_new_coordinates})
the analogous quantities $W^{r}$, $\hat{W}^{r}$, $\mathring{W}^{r}$ for (\ref{eq:g}). In the new
coordinate function $\tilde{r}$, we have
\begin{align*}
R(g)={} & -n(n-1)+R(\gamma)\\
      & -2e^{-\hat{\tilde{v}}-(n-1)\ln\tilde{r}}\partial_{\tilde{r}}\Big[e^{-2\tilde{u}+\hat{\tilde{v}}}\tilde{r}^{n-1}\big(\partial_{\tilde{r}}\hat{\tilde{v}}+\hat{\tilde{W}}^{r}\big)\Big]\\
      & -\frac{n\tilde{r}_{0}^{n}}{\tilde{r}^{n-1}}\partial_{\tilde{r}}\hat{\tilde{v}}-2e^{2\tilde{u}-\hat{\tilde{v}}}\partial_{\xi}\big(e^{-\hat{\tilde{v}}}\tilde{W}^{\xi}\big)\\
      & -2e^{-2\tilde{u}}\Big[\frac{n-1}{2(n-2)}\big(\hat{\tilde{W}}^{r}\big)^{2}+\frac{1}{8}\vert\tilde{\mathring{W}}^{r}\vert_{\gamma}^{2}\Big]\\
      & -2e^{-2\tilde{v}}\Big[\frac{n-1}{2(n-2)}(\tilde{W}^{\xi})^{2}+\frac{1}{8}\vert\tilde{\mathring{W}}^{\xi}\vert_{\gamma}^{2}\Big].
\end{align*}
This equality can be rewritten as
\beq
\begin{aligned}
-2\partial_{\tilde{r}}\bigg[e^{-2\tilde{u}+\hat{\tilde{v}}}\tilde{r}^{n-1}\big(\partial_{\tilde{r}}\hat{\tilde{v}}+\hat{\tilde{W}}^{r}\big)\bigg]
={}& \;n\tilde{r}_{0}^{n}\partial_{\tilde{r}}\big(e^{\hat{\tilde{v}}}\big)+2e^{2\tilde{u}}\tilde{r}^{n-1}\partial_{\xi}\big(e^{-\hat{\tilde{v}}}\tilde{W}^{\xi}\big)\\
 & -e^{\hat{\tilde{v}}}\tilde{r}^{n-1}R(\gamma)+A\label{eq:scalar_curvature_good_form-2-1}
\end{aligned}
\eeq
where
\begin{align*}
A ={} & e^{\hat{\tilde{v}}}\tilde{r}^{n-1}\big[R(g)+n(n-1)\big]\\
    & +2e^{-2\tilde{u}+\tilde{v}}\tilde{r}^{n-1}\bigg[\frac{n-1}{2(n-2)}\big(\hat{\tilde{W}}^{r}\big)^{2}+\frac{1}{8}\vert\tilde{\mathring{W}}^{r}\vert_{\gamma}^{2}\bigg]\\
    & +2e^{2\tilde{u}-\hat{\tilde{v}}}\tilde{r}^{n-1}\bigg[\frac{n-1}{2(n-2)}(\tilde{W}^{\xi})^{2}+\frac{1}{8}\vert\tilde{\mathring{W}}^{\xi}\vert_{\gamma}^{2}\bigg]\geq0.
\end{align*}
Integrating (\ref{eq:scalar_curvature_good_form-2-1}) on $[\tilde{r}_{0},\infty)$, noting that $e^{-2\tilde{u}}$ vanishes at $\tilde{r}=\tilde{r}_{0}$, and using (\ref{eq:v_tilde_hat_r0}), we obtain
\beq
\begin{aligned}
& \lim_{\tilde{r}\to\infty}-2e^{-2\tilde{u} +\hat{\tilde{v}}}\tilde{r}^{n-1}\big(\partial_{\tilde{r}}\hat{\tilde{v}}+\hat{\tilde{W}}^{r}\big) =n\tilde{r}_{0}^{n}\bigg(1-\frac{\breve{r}_{0}}{\tilde{r}_{0}}\bigg)\\
& +\int_{\tilde{r}_{0}}^{\infty}\bigg[2e^{2\tilde{u}}\tilde{r}^{n-1}\partial_{\xi}\big(e^{-\hat{\tilde{v}}}\tilde{W}^{\xi}\big)-e^{\hat{\tilde{v}}}\tilde{r}^{n-1}R(\gamma)+A\bigg]d\tilde{r}.\label{eq:integration_scalar_curv}
\end{aligned}
\eeq
Since
\begin{align*}
\hat{\tilde{W}}^{r} & =\frac{1}{2}\text{tr}_{\gamma}(\partial_{\tilde{r}}\gamma)-\frac{n-2}{\tilde{r}}\\
 & =-\frac{n-1}{\tilde{r}^{n}}\text{tr}_{h_{0}}\tilde{w}_{n-1}-\frac{n}{\tilde{r}^{n+1}}\text{tr}_{h_{0}}\tilde{w}_{n}+O(\tilde{r}^{-2-n}).
\end{align*}
Combining this with (\ref{eq:v_new_coordinates}) and (\ref{eq:R+n(n-1)_integrable_new_coordinates}),
we obtain
\begin{align*}
\partial_{\tilde{r}}\hat{\tilde{v}}+\hat{\tilde{W}}^{r} & =-\frac{n-1}{\tilde{r}^{n}}\big(\tilde{v}_{n-1}+\text{tr}_{h_{0}}\tilde{w}_{n-1}\big)-\frac{n}{\tilde{r}^{n+1}}\big(\tilde{v}_{n}+\text{tr}_{h_{0}}\tilde{w}_{n}\big)+O(\tilde{r}^{-2-n})\\
 & =-\frac{n}{\tilde{r}^{n+1}}\big(\tilde{v}_{n}+\text{tr}_{h_{0}}\tilde{w}_{n}\big)+O(\tilde{r}^{-2-n}),
\end{align*}
which implies
\[
\lim_{\tilde{r}\to\infty}-2e^{-2\tilde{u}+\hat{\tilde{v}}}\tilde{r}^{n-1}\big(\partial_{\tilde{r}}\hat{\tilde{v}}+\hat{\tilde{W}}^{r}\big)=2n\big(\tilde{v}_{n}+\text{tr}_{h_{0}}\tilde{w}_{n}\big).
\]
Put this into (\ref{eq:integration_scalar_curv}), and then integrate
over $(\partial_{\infty}M,h_{0})$. Noting that the term containing
$\partial_{\xi}\big(e^{-\hat{\tilde{v}}}\tilde{W}^{\xi}\big)$ cancels
away after integrating over $\xi$, and recalling the assumption (\ref{eq:Main_thm_scalar_cur_torus}),
we obtain
\beQ
\begin{aligned}
2n\int_{\partial_{\infty}M}\big(\tilde{v}_{n}+\text{tr}_{h_{0}}\tilde{w}_{n}\big)dV_{h_{0}}\geq{}& n\tilde{r}_{0}^{n}\bigg(1-\frac{\breve{r}_{0}}{\tilde{r}_{0}}\bigg)\text{Vol}(\partial_{\infty}M,h_{0})\\
   &+\int_{\partial_{\infty}M}\int_{\tilde{r}_{0}}^{\infty}Ad\tilde{r}dV_{h_{0}}.
\end{aligned}
\eeQ
Inserting this into (\ref{eq:mass_diff_2}), we obtain
\beQ
\begin{aligned}
E(g)-E(g_{\text{HM}})\geq{}&\tilde{r}_{0}^{n}\bigg[n-1+\bigg(\frac{\breve{r}_{0}}{\tilde{r}_{0}}\bigg)^{n}-n\frac{\breve{r}_{0}}{\tilde{r}_{0}}\bigg]\text{Vol}(\partial_{\infty}M,h_{0})\\
                      &+\int_{\partial_{\infty}M}\int_{\tilde{r}_{0}}^{\infty}Ad\tilde{r}dV_{h_{0}}.
\end{aligned}
\eeQ
It is easy to see that
\[
n-1+s^{n}-ns=(1-s)(n-1-s-\cdots-s^{n-1})\geq0
\]
for $s>0$, and the equality occurs if and only if $s=1$. Taking $s=\frac{\breve{r}_{0}}{\tilde{r}_{0}}$, we obtain
\[
E(g)-E(g_{\text{HM}})\geq0.
\]

For the case of equality, we have
\beQ
\tilde{r}_{0}=\breve{r}_{0}, \quad A\equiv0,
\eeQ
which implies
\[
R+n(n-1)=0,\quad\hat{\tilde{W}}^{r}=\tilde{W}^{\xi}=0,\quad\tilde{\mathring{W}}^{r}=0,\quad\tilde{\mathring{W}}^{\xi}=0.
\]
It follows that
\beQ
\tilde{W}^{r}=\frac{n-2}{\tilde{r}}+\hat{\tilde{W}}^{r}=\frac{n-2}{\tilde{r}}.
\eeQ
Hence
\[
\partial_{r}(\tilde{\gamma}_{ij})=\frac{2\tilde{W}^{r}}{n-2}\tilde{\gamma}_{ij}+\tilde{\mathring{W}}^{r}=\frac{2}{\tilde{r}}\tilde{\gamma}_{ij}.
\]
Therefore
\beQ
\partial_{r}\left(\frac{1}{\tilde{r}^{2}}\tilde{\gamma}_{ij}\right)=0.
\eeQ
It implies that
\beQ
\frac{1}{\tilde{r}^{2}}\tilde{\gamma}_{ij}=\frac{1}{\tilde{r}^{2}}\tilde{\gamma}_{ij}\Big\vert _{\tilde{r}=\infty}=\delta_{ij}, \quad \gamma=\tilde{r}^{2}\sum_{i=3}^{n}(d\phi^{i})^{2}.
\eeQ
Now we can simplify (\ref{eq:scalar_curvature_good_form-2-1}) and find that
\[
\partial_{\tilde{r}}^{2}\bigg(e^{\hat{\tilde{v}}}\bigg)+\frac{(n+1)\tilde{r}^{n}+(\frac{n}{2}-1)\tilde{r}_{0}^{n}}{\tilde{r}\big(\tilde{r}^{n}-\tilde{r}_{0}^{n}\big)}\cdot\bigg(\partial_{\tilde{r}}e^{\hat{\tilde{v}}}\bigg)^{2}=0.
\]
Since
\beQ
e^{\hat{\tilde{v}}(\tilde{r}_{0},\xi)}=\frac{\breve{r}_{0}}{\tilde{r}_{0}}=1=\lim_{\tilde{r}\to\infty}e^{\hat{\tilde{v}}(\tilde{r})},
\eeQ
we can show $\hat{\tilde{v}}\equiv0$ by the maximum principle. Finally, we obtain
\[
g=\frac{1}{\tilde{r}^{2}\bigg(1-\frac{\breve{r}_{0}^{n}}{\tilde{r}^{n}}\bigg)}d\tilde{r}^{2}+\tilde{r}^{2}\bigg(1-\frac{1}{\tilde{r}^{n}}\bigg)d\xi^{2}+\tilde{r}^{2}\sum_{i=3}^{n}(d\phi^{i})^{2}.
\]
Thus $g$ is the Horowitz--Myers metric $g_{\text{HM}}$.
\end{proof}

\bigskip

{\footnotesize {\it Acknowledgement. The work is supported by Chinese National Natural Science of Foundation (Grant No. 11731001), the special foundation for Junwu and Guangxi Ba Gui Scholars, the Guangdong Basic and Applied Basic Research Foundation 2021A1515010379. The authors thank the referees for carefully reading the manuscript and for their helpful suggestions.}

\end{document}